\def\cgaps#1{}
\def\Cgaps#1{}
\def\AMStag#1{}
\def\AMSunderset#1\to#2{\underset{#1}{#2}}
\def\AMSoverset#1\to#2{\overset{#1}{#2}}
\def\undersetbrace#1\to#2{\underbrace{#2}_{#1}}
\def\oversetbrace#1\to#2{\overbrace{#2}^{#1}}
\def\3{\ss}
\newcommand{\nmb}[2]{\ifx!#1{\ref{nmb:#2}}%
\else\if.#1{\label{nmb:#2}}%
\else\if0#1{\label{nmb:#2}}%
\else{{#2}}%
\fi\fi\fi}
\newtheorem*{proposition*}{Proposition}
\newtheorem{theorem}[subsection]{Theorem}
\newtheorem*{theorem*}{Theorem}
\newtheorem{lemma}[subsection]{Lemma}
\newtheorem*{lemma*}{Lemma}
\newtheorem*{corollary*}{Corollary}
\newtheorem*{result*}{Result}
\newtheorem*{remark}{Remark}
\def\cit#1#2{\ifx#1!\cite{2}\else#2\fi} 
\def\ign#1{}             
\def\o{\circ\,} 
\def\X{\mathfrak X} 
\def\al{\alpha} 
\def\be{\beta}
\def\ep{\varepsilon}
\def\la{\lambda} 
\def\rh{\rho} 
\def\ta{\tau} 
\def\ph{\varphi} 
\def\ps{\psi} 
\def\Ga{\Gamma}
\def\i{^{-1}}
\def\p{\partial}
\def\ad{\operatorname{ad}}
\def\F{\mathcal{F}}
\def\exp{\operatorname{exp}}
\let\on=\operatorname
\DeclareMathOperator{\Id}{Id}
\DeclareMathOperator{\Diff}{Diff}
\newcommand{\N}{\mathbb{N}}
\newcommand{\R}{\mathbb{R}}
\newcommand{\ud}{\,\mathrm{d}}
\newcommand{\one}{\mathbbm{1}}
\newcommand{\executeiffilenewer}[3]{%
\ifnum\pdfstrcmp{\pdffilemoddate{#1}}%
{\pdffilemoddate{#2}}>0%
{\immediate\write18{#3}}\fi%
}
\newcommand{%
\executeiffilenewer{.svg}{.pdf}%
{inkscape -z -D --file=.svg %
--export-pdf=.pdf --export-latex}%
\input{.pdf_tex}%
}[1]{%
\executeiffilenewer{#1.svg}{#1.pdf}%
{inkscape -z -D --file=#1.svg %
--export-pdf=#1.pdf --export-latex}%
\input{#1.pdf_tex}%
}
\title[Fractional order Sobolev 
metrics on the diffeomorphism group]
{Geodesic distance for right invariant Sobolev 
metrics of fractional order on the diffeomorphism group}
\author{Martin Bauer, Martins Bruveris, Philipp Harms, Peter W. Michor}
\address{
Martin Bauer, Peter W.\ Michor: Fakult\"at f\"ur Mathematik,
Universit\"at Wien, 
Nordbergstrasse 15, A-1090 Wien, Austria.\newline\indent
Martins Bruveris: Dep. of Mathematics, Imperial College, London SW7~2AZ, UK.
\newline\indent
Philipp Harms: Edlabs, Harvard University, 44 Brattle street, Cambridge, MA 02138
}
\email{bauer.martin@univie.ac.at}
\email{m.bruveris08@imperial.ac.uk}
\email{pharms@edlabs.harvard.edu}
\email{peter.michor@esi.ac.at }
\date{{\today} } 
\thanks{All authors were supported by `Fonds zur
F\"orderung der wissenschaftlichen                    
Forschung, Projekt P~21030'. Martin Bauer was partially supported by `Fonds zur
F\"orderung der wissenschaftlichen                    
Forschung, Projekt P~24625' and  Martins Bruveris was partially supported by the European Research Council.}
\keywords{diffeomorphism group, geodesic distance, Sobolev metrics of
non-integral order}
\subjclass[2010]{Primary 35Q31, 58B20, 58D05} 
\begin{document}
\begin{abstract}
We study Sobolev-type metrics of fractional order $s\geq0$ on the group $\Diff_c(M)$ of compactly supported diffeomorphisms of a manifold $M$. We show that for the important special case $M=S^1$ the geodesic distance on $\Diff_c(S^1)$ vanishes if and only if $s\leq\frac12$. For other manifolds we obtain a partial characterization: the geodesic distance on $\Diff_c(M)$ vanishes for $M=\R\times N, s<\frac12$ and for $M=S^1\times N, s\leq\frac12$, with $N$ being a compact Riemannian manifold. On the other hand the geodesic distance on $\Diff_c(M)$ is positive for $\dim(M)=1, s>\frac12$ and $\dim(M)\geq2, s\geq1$.

For $M=\R^n$ we discuss the geodesic equations for these metrics. For $n=1$ we obtain some well known PDEs of hydrodynamics: 
Burgers' equation for $s=0$, the modified Constantin-Lax-Majda equation for $s=\frac 12$ and the Camassa-Holm equation for $s=1$.
\end{abstract} 

\maketitle

\section{Introduction}
In the seminal paper \cite{Arnold1966} Arnold showed that the incompressible Euler equations can be seen as geodesic equations on the group of volume preserving diffeomorphisms with respect to the $L^2$-metric.
This interpretation was extended to other PDEs used in hydrodynamics, some of which are: Burgers' equation as the geodesic equation for $\on{Diff}_c(\R)$ with the $L^2$-metric, Camassa-Holm equation for $\on{Diff}_c(\R)$ with the $H^1$-metric or KdV for the Virasoro-Bott group with the $L^2$-metric in \cite{OvsienkoKhesin87}. Recently it was shown by Wunsch \cite{Wunsch2010a} that the modified Constantin-Lax-Majda equation (mCLM) is the geodesic equation on the homogeneous space $\on{Diff}(S^1)/S^1$ with respect to the homogeneous $\dot H^{1/2}$-metric.

The geometric interpretation was used by Ebin and Marsden in \cite{Ebin1970} to show the well-posedness of the Euler equations. These techniques were expanded and applied to other equations, see e.g. \cite{Constantin2007, Constantin2003, Wunsch2010, GayBalmaz2009}.

The interpretation of a PDE as the geodesic equation on an infinite dimensional manifold opens up a variety of geometrical questions, which may be asked about the manifold. What is the curvature of this manifold? Do geodesics have conjugate points? Do there exist totally geodesic submanifolds? The question we want to concentrate upon in this paper, that of geodesic distance, has a very simple answer in finite dimensions but shows more nuances in infinite dimensions. The geodesic distance between two points is defined as the infimum of the pathlength over all paths connecting the two points. In finite dimensions, because of the local invertibility of the exponential map, this distance is always positive and the topology of the resulting metric space is the same as the manifold topology. 

However, in infinite dimensions, this does not always hold: the induced geodesic distance for weak Riemannian metrics on infinite dimensional manifolds may vanish. This surprising fact was first noticed for the $L^2$-metric on shape space $\on{Imm}(S^1,\mathbb R^2)/\on{Diff}(S^1)$  in \cite[3.10]{Michor98}. Here $\on{Imm}(S^1,\mathbb R^2)/\on{Diff}(S^1)$ denotes the orbifold of all immersions $\on{Imm}(S^1,\mathbb R^2)$ of $S^1$ into $\R^2$ modulo reparametrizations. In \cite{Michor102} it was shown that this result holds for the general shape space $\on{Imm}(M,N)/\on{Diff}(M)$ for any compact manifold $M$ and Riemannian manifold $N$, and also for the right invariant $L^2$-metric (or equivalently Sobolev-type metric of order zero) on each full diffeomorphism group with compact support $\on{Diff}_c(N)$. In particular, since Burgers' equation is related to the geodesic equation of the right invariant $L^2$-metric on $\on{Diff}_c(\R^1)$, it implies that solutions of Burgers' equation are critical points of the 
length functional, but they are not length-minimizing. A similar result was shown for the KdV equation in \cite{Michor122}. 

On the other hand it was shown in \cite{Michor102,Michor119} that for Sobolev-type metrics on the diffeomorphism group of order one or higher the induced geodesic distance is positive. 
This naturally leads to the question whether one can determine the Sobolev order where this change of behavior occurs. This paper gives a complete answer in the case of $M=S^1$ and a partial answer for other manifolds. Our main result is:

\begin{theorem*}[Geodesic distance]
Let $M$ be a Riemannian manifold and 
$\Diff_c(M)$ the group of compactly supported diffeomorphisms of $M$.
\begin{enumerate}
\item
The geodesic distance
for the fractional order Sobolev type metric $H^s$ on $\Diff_c(M)$
vanishes for
\begin{itemize}
\item
$0\leq s < \frac 12$ and $M$ a Riemannian manifold that is the product of $\R$ with a compact manifold $N$, i.e., $M = \R\times N$.
\item
$s = \frac 12$ and  $M$ a Riemannian manifold that is the product of $S^1$ with a compact manifold $N$, i.e., $M = S^1 \times N$.
\end{itemize}
\item
For $\dim(M)=1$ the induced geodesic distance is positive for $\tfrac12< s$ and for
general $\dim(M)\geq 2$ the geodesic distance 
is positive for $1\le s$.
\end{enumerate}
\end{theorem*}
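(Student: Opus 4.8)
Throughout, the length of a path $t\mapsto\ph(t)$ in $\Diff_c(M)$ is $L(\ph)=\int_0^1\norm{u(t)}_{H^s}\,\ud t$, where $u(t)=\dot\ph(t)\circ\ph(t)^{-1}$ is the Eulerian velocity, and $d(\ph_0,\ph_1)=\inf L(\ph)$. The first move is to record a dichotomy. By right invariance the set $Z=\{\ph:d(\Id,\ph)=0\}$ is a subgroup, and it is in fact \emph{normal}: conjugating a path $\ph(t)$ by a fixed $\et\in\Diff_c(M)$ gives the path $\et\,\ph(t)\,\et^{-1}$ with velocity $\Ad_\et u(t)=T\et\circ u(t)\circ\et^{-1}$, and since pushforward of vector fields by a fixed diffeomorphism is a bounded operator on $H^s$, one gets $d(\Id,\et\ph\et^{-1})\le C_\et\,d(\Id,\ph)$. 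As $\Diff_c(M)$ is simple, $Z$ is either trivial or everything. Hence to prove (2) it suffices to produce one $\ph\ne\Id$ with $d(\Id,\ph)>0$, and to prove (1) it suffices to join $\Id$ to a fixed nontrivial diffeomorphism by paths of arbitrarily small length.

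\textbf{Vanishing distance, part (1).} The plan is to build explicit families of paths whose length tends to $0$, guided by the scaling of the homogeneous norm of a bump $u(x)=A\,g(x/\ell)$ on a slab of width $\ell$ in the $\R$- (resp. $S^1$-) factor, namely $\norm{u}_{\dot H^s}\sim A\,\ell^{(1-2s)/2}$. Transporting such a slab a fixed distance by a ``surfing'' wave, whose front speed is comparable to the fluid speed, costs a length of order $\ell^{(1-2s)/2}$, and this exponent is \emph{positive} exactly when $s<\tfrac12$; letting $\ell\to0$ then drives the elementary cost to $0$. I would assemble finitely many such cheap elementary moves into a path ending at a fixed nontrivial diffeomorphism, and extend the one-dimensional construction to $M=\R\times N$ (resp. $S^1\times N$) by tensoring with a fixed profile on the compact factor $N$, using that $\norm{\cdot}_{H^s(\R\times N)}$ is comparable to the one-dimensional norm through the spectral decomposition of the Laplacian on $N$. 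The main obstacle is twofold. First, one must keep the endpoint a fixed nontrivial diffeomorphism while the length tends to $0$, since the cheap moves individually converge to $\Id$; their combinatorial arrangement is the technical heart of the argument. Second, and hardest, is the borderline $s=\tfrac12$ on $S^1\times N$, where the exponent $(1-2s)/2$ vanishes and the naive estimate gives no gain: here one must exploit that on the compact factor the Fourier spectrum is discrete and bounded away from $0$ on the nonconstant modes, yielding a logarithmic improvement just sufficient to push the length to $0$. The strict inequality $s<\tfrac12$ for the noncompact factor $\R$ reflects precisely the absence of such a spectral gap.

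\textbf{Positive distance, part (2).} For $\dim M=1$ and $s>\tfrac12$ the Sobolev embedding $H^s(M)\hookrightarrow C^0(M)$ holds (the threshold is $\dim/2=\tfrac12$), so $\norm{u(t)}_{C^0}\le C\norm{u(t)}_{H^s}$. For any path from $\Id$ to $\ph$ and any point $x_0$ the material trajectory obeys $|\dot\ph(t,x_0)|=|u(t,\ph(t,x_0))|\le\norm{u(t)}_{C^0}$, whence $|\ph(x_0)-x_0|\le\int_0^1\norm{u(t)}_{C^0}\,\ud t\le C\,L(\ph)$ and, on taking the infimum, $\norm{\ph-\Id}_{C^0}\le C\,d(\Id,\ph)$, which is positive for every $\ph\ne\Id$. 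For $\dim M\ge2$ and $s\ge1$, note $\norm{u}_{H^s}\ge\norm{u}_{H^1}$, so $d_{H^s}\ge d_{H^1}$ and it suffices to treat $s=1$. Fix a bounded open set $A$ with smooth boundary and set $W(t)=\operatorname{vol}(\ph(t)(A))$; the transport identity $W'(t)=\int_{\ph(t)(A)}\divergence u(t)\,\ud\mu$ and Cauchy--Schwarz give $|W'(t)|\le\norm{\divergence u(t)}_{L^2}\,W(t)^{1/2}\le C\norm{u(t)}_{H^1}\,W(t)^{1/2}$, i.e. $|(W^{1/2})'(t)|\le\tfrac{C}{2}\norm{u(t)}_{H^1}$, so that $|\operatorname{vol}(\ph(A))^{1/2}-\operatorname{vol}(A)^{1/2}|\le\tfrac{C}{2}\,d_{H^1}(\Id,\ph)$. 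Choosing a compactly supported $\ph$ that maps some $A$ to a set of different volume makes the left side positive, giving $d_{H^1}(\Id,\ph)>0$; the dichotomy of the first paragraph then upgrades this to positivity of $d_{H^1}$, hence of $d_{H^s}$ for all $s\ge1$, for every $\ph\ne\Id$. I expect the short-path construction of part (1), and in particular the borderline $s=\tfrac12$ on $S^1\times N$, to be the principal difficulty; the positivity statements are comparatively soft once the right Lipschitz functionals (pointwise displacement in dimension one, volume distortion in higher dimensions) are identified.
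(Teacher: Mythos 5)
Your overall architecture matches the paper's quite closely: the dichotomy via the normal subgroup $Z=\{\ph: d(\Id,\ph)=0\}$ together with simplicity of $\Diff_0(M)$ (Epstein--Thurston--Mather), moving-bump paths with cost of order $\ell^{(1-2s)/2}$ for $s<\tfrac12$, the product trick for the factor $N$, and the Sobolev-embedding displacement bound for $\dim M=1$, $s>\tfrac12$ are exactly the paper's steps. One place where you genuinely diverge, to your credit, is $\dim M\ge2$, $s\ge1$: the paper simply cites \cite[Theorem~5.7]{Michor102}, whereas you give a self-contained proof via the volume functional $W(t)=\on{vol}(\ph(t)(A))$, the transport identity, and Cauchy--Schwarz, upgrading from volume-distorting diffeomorphisms to all of them by the normal-subgroup dichotomy. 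That argument is correct (on a manifold of bounded geometry $\norm{\divergence u}_{L^2}\le C\norm{u}_{H^1}$ does hold) and replaces an external citation by an elementary estimate.

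The genuine gap is the borderline case $s=\tfrac12$, which you rightly flag as the principal difficulty, but for which your proposed mechanism is wrong. You attribute the needed gain to the discreteness of the spectrum of the Laplacian on the compact factor $N$. But the case $N=\{\mathrm{pt}\}$, i.e.\ $M=S^1$, already contains the whole problem: there is no auxiliary factor to supply a spectral gap, and the nonconstant modes on $N$ never enter, since the product construction uses vector fields pointing in the $S^1$-direction and constant along $N$. The compactness that matters is that of the factor on which the diffeomorphism acts; its role is simply to make the rigid rotation $x\mapsto x+1$ an element of the group. What actually powers the critical case in the paper is the quantitative failure of the critical embedding $H^{1/2}\not\hookrightarrow L^\infty$ in one dimension (Lemma~\ref{lem:unbounded}, taken from Triebel): the lacunary sums $f_N=\tfrac1N\sum_{j=0}^{N-1}\psi(2^j\cdot)$ satisfy $\norm{f_N}_{H^{1/2}}^2\le C/N$ while $\norm{f_N}_\infty$ stays of order one. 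These are fed into the near-resonant travelling wave $u(t,x)=\la f_N(t-x)$ with $\la\uparrow1$: particles surf the wave, the induced shift $I(\la)=\int_{-1}^{1}\tfrac{\la f_N(t)}{1-\la f_N(t)}\,\ud t$ diverges as $\la\to1$, so $\la$ can be chosen with $I(\la)=1$, producing the unit rotation in the bounded time $T_{\on{end}}=I(\la)+2\pi$ with energy at most $C\,T_{\on{end}}/N\to0$ (Lemma~\ref{lem:r2}). Your slab estimate, whose exponent $(1-2s)/2$ vanishes at $s=\tfrac12$, gives no gain precisely because it only uses scaling; the gain is indeed logarithmic, but it comes from dyadic sums in the $S^1$-variable, not from $N$.

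Relatedly, your diagnosis of why $\R$ is excluded at $s=\tfrac12$ (``absence of a spectral gap'') is also off: the paper conjectures that vanishing does hold on $\R$ at $s=\tfrac12$. The actual obstruction is that the unit shift is not compactly supported, so on $\R$ one must instead reach a diffeomorphism $x\mapsto x+c(x)$ with $c$ compactly supported, and controlling the endpoint of the surfing flow for such non-uniform shifts is the unresolved technical issue --- not any Fourier-analytic difference between $\R$ and $S^1$.
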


Let us now briefly outline the structure of this work. In Section~\ref{sec:frac} we review the basic definition of Sobolev-type metrics on diffeomorphism groups. For a compact Riemannian manifold $M$  (see  \ref{boundedGeometry}) we consider the Sobolev metric $H^s$ of order $s$ on the Lie algebra of vector fields and the induced right invariant metric on the diffeomorphism group $\on{Diff}(M)$. The main results regarding  geodesic distance  are contained in Section~\ref{vanishing} and~\ref{non-vanishing}. In the final section \ref{geodEq} we derive the geodesic equations for different versions of the Sobolev metric of order $s$ on $\Diff_c(\mathbb R^n)$ and discuss their relation to various well-known PDE's.

\section{Sobolev metrics $H^s$ with $s\in \mathbb R$}
\label{sec:frac}

In this section we give definitions for Sobolev norms of fractional orders and
state the properties, which we will need later to prove the vanishing geodesic distance results.

\subsection{Sobolev metrics $H^s$ on $\mathbb R^n$}
For $s>0$ the Sobolev $H^s$-norm of an $\R^n$-valued function $f$ on $\R^n$ is
defined as
\begin{equation}
\label{Hs_norm}
\| f \|_{H^s(\R^n)}^2 = \| \F\i (1+|\xi|^2)^{\frac{s}2} \F f \|_{L^2(\R^n)}^2\;,
\end{equation}
where $\F$ is the Fourier transform 
\[ \F f(\xi) = (2\pi)^{-\frac n 2} \int_{\R^n} e^{-i \langle  x,\xi\rangle} f(x)
dx \]
and $\xi$ is the independent variable in the frequency domain.
An equivalent norm is given by
\begin{equation}
\label{Hsbar_norm}
\| f \|^2_{\overline{H}^s(\R^n)} = \| f \|^2_{L^2(\R^n)} + \| |\xi|^s \F f
\|^2_{L^2(\R^n)}\;.
\end{equation}
The fact that both norms are equivalent is based on the inequality
\[ \frac{1}C \big( 1 + \sum_j |\xi_j|^s \big) \leq \big(1 + \sum_j |\xi_j|^2
\big)^{\frac{s}{2}} \leq C \big( 1 + \sum_j |\xi_j|^s \big) \]
holding for some constant $C$. For $s>1$ this says that all $\ell^s$-norms on
$\R^{n+1}$ are equivalent. 
But the inequality is true also for $0 < s < 1$, even though the expression does
not define a norm on $\R^{n+1}$.
Using any of these norms we obtain the  Sobolev spaces with non-integral $s$
$$H^s(\R^n) = \{ f \in L^2(\R^n): \| f \|_{H^s(\R^n)} < \infty \}\;.$$
These spaces are also known under the name Liouville spaces or Bessel potential
spaces. 
To make a connection with other families of function spaces, we note that the
spaces $H^s(\R^n)$ coincide with
\[ H^s(\R^n) = B^s_{22}(\R^n) = F^s_{22}(\R^n) \]
the Besov spaces $B^s_{22}(\R^n)$ and spaces of Triebel-Lizorkin type
$F^s_{22}(\R^n)$. Definitions of these spaces and a nice introduction to the
general theory of function spaces can be found in \cite[section~1]{Triebel1992}.

We will now collect some results about this family of spaces, which we will need in
the coming sections. First a result, which states that point wise multiplication
with a sufficiently smooth function is well defined.

\begin{theorem}[See theorem 4.2.2 in \cite{Triebel1992}]
\label{thm:multiplication}
Let $s>0$ and $g \in C_c^\infty(\R^n)$, a smooth function with compact support. 
Then multiplication $f \mapsto gf$ is a bounded map of $H^s(\R^n)$ into itself.
\end{theorem}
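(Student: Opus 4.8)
The plan is to work entirely on the Fourier side, using the definition \eqref{Hs_norm}: by Plancherel's theorem the Fourier transform is unitary on $L^2$ (with the normalization chosen here), so $\norm{f}_{H^s}=\norm{(1+|\xi|^2)^{s/2}\F f}_{L^2}$. Since $g\in C_c^\infty(\R^n)$ is a Schwartz function, $\F g$ is again Schwartz, and the Fourier transform turns the product into a convolution, $\F(gf)=(2\pi)^{-\frac n2}\,\F g * \F f$. Thus the asserted boundedness of $f\mapsto gf$ is equivalent to the estimate
\[
\norm{(1+|\xi|^2)^{s/2}\,\big(\F g * \F f\big)}_{L^2}\le C\,\norm{(1+|\xi|^2)^{s/2}\F f}_{L^2}\;.
\]

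The tool that controls the weight across the convolution is Peetre's inequality, $(1+|\xi|^2)^{s/2}\le 2^{|s|/2}(1+|\xi-\eta|^2)^{|s|/2}(1+|\eta|^2)^{s/2}$, valid for every $s\in\R$. I would insert this into the convolution integral $(\F g*\F f)(\xi)=\int_{\R^n}\F g(\xi-\eta)\,\F f(\eta)\ud\eta$, moving the factor $(1+|\eta|^2)^{s/2}$ onto $\F f$ and the factor $(1+|\xi-\eta|^2)^{|s|/2}$ onto $\F g$. This bounds the integrand pointwise by $h(\xi-\eta)\,k(\eta)$, where $h(\xi):=(1+|\xi|^2)^{|s|/2}|\F g(\xi)|$ and $k(\eta):=(1+|\eta|^2)^{s/2}|\F f(\eta)|$, so the left-hand side is dominated by the honest convolution $(h*k)(\xi)$. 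Young's convolution inequality then gives $\norm{h*k}_{L^2}\le\norm{h}_{L^1}\norm{k}_{L^2}$, and since $\norm{k}_{L^2}=\norm{f}_{H^s}$ this yields the claim with $C=2^{|s|/2}(2\pi)^{-\frac n2}\norm{h}_{L^1}$.

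The only remaining point is to verify that this constant is finite, i.e.\ that $\norm{h}_{L^1}=\int_{\R^n}(1+|\xi|^2)^{|s|/2}|\F g(\xi)|\ud\xi<\infty$; this is immediate because $\F g$ is Schwartz and hence decays faster than any polynomial, so the polynomial weight is absorbed. This is exactly the step where the smoothness of $g$ is used (and one sees that compact support is inessential — rapid decay of $\F g$, hence Schwartz regularity of $g$, suffices, and the argument is insensitive to the sign of $s$, covering all real $s$). The genuine obstacle here is conceptual rather than technical: for integer $s=m$ the statement is a one-line consequence of the Leibniz rule, since the derivatives of $g$ are bounded, but for fractional $s$ there is no Leibniz rule to differentiate ``$s$ times.'' Peetre's inequality is precisely the substitute that converts the multiplier estimate into a convolution estimate, after which everything is routine. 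An alternative, equally clean route would bypass fractional orders entirely: establish boundedness on $H^{m}$ for the two integers $m$ bracketing $s$ via Leibniz, and then obtain the fractional case by complex interpolation, using that $[H^{m_0},H^{m_1}]_\theta=H^s$ for $s=(1-\theta)m_0+\theta m_1$.
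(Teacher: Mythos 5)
Your proof is correct, but it follows a genuinely different route from the paper, which in fact offers no proof at all: the statement is quoted verbatim as Theorem~4.2.2 of Triebel's \emph{Theory of Function Spaces~II}, where it is established in much greater generality (pointwise multiplication on the Triebel--Lizorkin scale $F^s_{pq}$, proved via Littlewood--Paley decomposition and paramultiplication estimates). Your argument instead exploits the fact that only the Hilbert-space case $p=2$ is needed here: by Plancherel the $H^s$-norm is a weighted $L^2$-norm on the Fourier side, multiplication becomes convolution, and then Peetre's inequality
\[
(1+|\xi|^2)^{s/2}\le 2^{|s|/2}(1+|\xi-\eta|^2)^{|s|/2}(1+|\eta|^2)^{s/2}
\]
plus Young's inequality $\norm{h*k}_{L^2}\le\norm{h}_{L^1}\norm{k}_{L^2}$ finish the job, with $\norm{h}_{L^1}<\infty$ because $\F g$ is Schwartz. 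Each step is standard and correctly executed, including the observation that the constant depends only on $\int(1+|\xi|^2)^{|s|/2}|\F g(\xi)|\ud\xi$. What your approach buys is a short, self-contained, elementary proof that in fact gives slightly more than the cited statement: it works for every real $s$ (not just $s>0$) and requires only that $g$ be Schwartz, not compactly supported. What it gives up is exactly the generality of Triebel's theorem: the reduction to a weighted $L^2$ convolution estimate is unavailable for $p\neq 2$, which is why the reference resorts to heavier harmonic-analysis machinery. Your fallback via the Leibniz rule on integer-order spaces and complex interpolation $[H^{m_0},H^{m_1}]_\theta=H^s$ is also legitimate, though it quietly relies on identifying the complex interpolation spaces of the Bessel potential scale, a fact of comparable depth to the multiplier theorem itself; the Peetre--Young argument is the cleaner of your two routes.
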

We are also allowed to compose with diffeomorphisms.

\begin{theorem}[See theorem 4.3.2 in \cite{Triebel1992}] \label{thm:composition}
Let $s>0$ and $\ph \in \Diff_c(\R^n)$ be a diffeomorphism which equals the
identity off some 
compact set. Then composition $f \mapsto f \o \ph$ is an isomorphic mapping of
$H^s(\R^n)$ onto itself.
\end{theorem}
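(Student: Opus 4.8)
The plan is to prove boundedness of the composition operator $C_\ph\colon f\mapsto f\o\ph$ on $H^s(\R^n)$ and then to deduce the isomorphism property by symmetry. Since $\ph$ is a smooth diffeomorphism equal to the identity outside a compact set $K$, its inverse $\ph\i$ has exactly the same properties; so once we know $C_\ph$ is bounded for \emph{every} such diffeomorphism, applying this to $\ph\i$ shows that $C_{\ph\i}=C_\ph\i$ is also bounded, whence $C_\ph$ is a topological isomorphism. The whole problem thus reduces to the single estimate $\norm{f\o\ph}_{H^s}\le C\norm{f}_{H^s}$.

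First I would dispose of the two ``easy'' orders. For $s=0$, i.e.\ on $L^2$, the change of variables $x=\ph(y)$ gives
\[
\norm{f\o\ph}_{L^2}^2=\int_{\R^n}|f(\ph(y))|^2\,dy=\int_{\R^n}|f(x)|^2\,|\det D\ph\i(x)|\,dx,
\]
and since $\ph$ is a diffeomorphism equal to the identity off $K$, its Jacobian $|\det D\ph|$ is continuous, positive, and identically $1$ outside $K$, hence bounded above and below by positive constants; this yields the $L^2$-bound. For an integer order $s=m\in\N$ I would differentiate by the chain rule (Fa\`a di Bruno): each $\p^\al(f\o\ph)$ with $|\al|\le m$ is a finite sum of terms $(\p^\be f)\o\ph\cdot P_{\al\be}$, where $|\be|\le|\al|$ and $P_{\al\be}$ is a universal polynomial in the derivatives $\p^\ga\ph$. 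As $\ph-\Id$ is smooth with compact support, all these derivatives are bounded, so combining with the $L^2$-estimate applied to every $\p^\be f$ gives $\norm{f\o\ph}_{H^m}\le C\norm{f}_{H^m}$.

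The fractional case is where the real work lies, and I would handle it by complex interpolation. Fixing an integer $m>s$, I would invoke the identification $H^s(\R^n)=F^s_{22}(\R^n)$ recorded above together with the interpolation formula $H^s=[L^2,H^m]_\th$ with $\th=s/m$ (valid for the Bessel-potential/Triebel--Lizorkin scale, see \cite{Triebel1992}). Since the \emph{linear} operator $C_\ph$ is bounded on both endpoint spaces $L^2=H^0$ and $H^m$ by the previous step, the interpolation theorem for bounded operators immediately yields
\[
\norm{C_\ph}_{H^s\to H^s}\le \norm{C_\ph}_{L^2\to L^2}^{\,1-\th}\,\norm{C_\ph}_{H^m\to H^m}^{\,\th}.
\]
For the range $0<s<1$ an alternative route avoids interpolation: one uses the Gagliardo seminorm $\iint|f(x)-f(y)|^2|x-y|^{-n-2s}\,dx\,dy$, notes that $\ph$ is bi-Lipschitz (its differential is bounded above and below), and checks that the substitution $x=\ph(y)$ leaves this double integral comparable.

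The main obstacle is precisely the justification of the fractional step. With the interpolation approach the delicate point is not an inequality but the structural fact that $H^s$ genuinely coincides with the complex interpolation space between $L^2$ and $H^m$ at the correct exponent; this rests on the Besov/Triebel--Lizorkin identification and on Triebel's interpolation theorems, which I would cite rather than reprove. If one instead prefers the Gagliardo route, the difficulty shifts to controlling the singular kernel $|x-y|^{-n-2s}$ near the diagonal under the change of variables, and then to bootstrapping from $0<s<1$ to all $s>0$ by writing $s=m+\si$ and combining the chain-rule estimate with a difference-quotient bound for the top-order derivatives. This is technically heavier, which is why the interpolation argument is the one I would carry out in full.
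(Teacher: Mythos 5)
Your proof is correct, but there is no proof in the paper to compare it against: Theorem \ref{thm:composition} is imported verbatim from Triebel \cite{Triebel1992}, where it is proved for the full Triebel--Lizorkin scale $F^s_{pq}$ using localization and local-means machinery. Your argument is therefore a genuinely different, and much more elementary, route that exploits the fact that the paper only ever needs the Hilbert case $H^s=F^s_{22}$. All the steps are sound: the $L^2$ bound holds because $\det D\ph$ is continuous, nonvanishing, and equal to $1$ off the compact set $K$, hence pinned between positive constants; the $H^m$ bound for integer $m$ follows from Fa\`a di Bruno since every derivative of $\ph$ is bounded ($\ph-\Id$ is smooth with compact support); and the reduction of the isomorphism claim to boundedness alone is valid because $\ph\i$ is again a smooth diffeomorphism equal to the identity off a compact set, so $C_{\ph\i}$ is a bounded two-sided inverse of $C_\ph$. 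The crux you single out, the identity $[L^2,H^m]_{s/m}=H^s$ for the complex method, in fact needs less than Triebel's general interpolation theorems when $p=2$: the Fourier transform carries the couple $(L^2,H^m)$ unitarily onto the weighted couple $\bigl(L^2(d\xi),\,L^2((1+|\xi|^2)^m d\xi)\bigr)$, and Stein--Weiss interpolation of weighted $L^2$ spaces yields precisely the weight $(1+|\xi|^2)^{\theta m}=(1+|\xi|^2)^s$, i.e.\ the $H^s$ norm; so your proof can be made fully self-contained modulo that one classical fact. Your alternative Gagliardo-seminorm route for $0<s<1$ is also viable (bi-Lipschitz invariance of the double integral) but, as you note, the bootstrap to $s>1$ is heavier, so the interpolation argument is the right choice. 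What the paper's citation buys is generality --- Triebel's theorem covers all admissible $F^s_{pq}$, including quasi-Banach cases with $p\le 1$ --- while your argument buys a short, transparent proof of exactly the statement the paper uses.
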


\subsection{Sobolev metrics on  Riemannian manifolds}
\label{boundedGeometry}
Following \cite[section~7.2.1]{Triebel1992} we will now introduce the spaces
$H^s(M)$ on a compact manifold $M$. Denote by $B_\ep(x)$ the ball of radius $\ep$ with center $x$. We can choose a finite cover of $M$ by balls $B_\ep(x_\al)$ with $\ep$ sufficiently small, such that normal coordinates are defined in the ball $B_\ep(x)$, and a partition of unity $\rh_\al$, subordinated to this cover. Using this data we define the $H^s$-norm of a function $f$ on $M$ via
\begin{align*}
\| f \|_{H^s(M,g)}^2 &= \sum_{\al} \| (\rh_\al f)\o \exp_{x_\al}
\|^2_{H^s(\R^n)}\\
 &= \sum_{\al} \| \F\i (1+|\xi|^2)^{\frac{s}2} \F ((\rh_\al f)\o
\exp_{x_\al}) \|^2_{L^2(\R^n)}\;.
\end{align*}
Changing the cover or the partition of unity leads to equivalent norms, 
see \cite[theorem 7.2.3]{Triebel1992}.
For integer $s$ we get norms which are equivalent to the Sobolev norms treated
in 
\cite[chapter 2]{Eichhorn2007}. The norms depend on the choice of the Riemann
metric $g$. This 
dependence is worked out in detail in \cite{Eichhorn2007}.

For vector fields we use the trivialization of the tangent bundle 
that is induced by the coordinate charts and define the norm in each coordinate
as above. 
This leads to a (up to equivalence) well-defined $H^s$-norm on the Lie algebra
$\X_c(M)$ of compactly supported vector fields on $M$.

These definitions can be extended to manifolds of the form $M=N \times \R^n, n \geq 0$ in the obvious way and to manifolds of bounded geometry \cite{Triebel1992} using the results of~\cite{Greene78,Kordjukov91,Shubin92,Eichhorn91}.

\subsection{Sobolev metrics on $\Diff_c(M)$}
Given a norm on $\X_c(M)$ we can use the right-multiplication in the
diffeomorphism group 
$\Diff_c(M)$  to extend this norm to a right-invariant Riemannian metric on
$\Diff_c(M)$. 
In detail, given $\ph \in \Diff_c(M)$ and $X, Y \in T_\ph \Diff_c(M)$ we define
\[ G^s_\ph(X,Y) = \langle X\o \ph\i, Y\o\ph\i \rangle_{H^s(M)}\;.\]

In chapter~\ref{vanishing} and in chapter~\ref{non-vanishing} we 
are interested in questions of vanishing and non-vanishing of geodesic
distance. 
These properties are invariant under changes to equivalent inner products, since
equivalent inner products on the Lie algebra
\[ \frac{1}C \langle X, Y \rangle_1 \leq \langle X, Y \rangle_2 \leq C \langle
X, Y \rangle_1 \]
imply that the geodesic distances will be equivalent metrics
\[ \frac{1}C \on{dist}_1(\ph, \ps) \leq \on{dist}_2(\ph, \ps) \leq C
\on{dist}_1(\ph, \ps) \;.\]
Therefore the ambiguity in the definition of the $H^s$-norm is of no concern to us.

In chapter~\ref{geodEq} we will study the geodesic equations of the Sobolev
metrics on $\Diff_c(\R^n)$.
Equivalent norms may induce different geodesic equations. Therefore we will
denote the metric that is induced by the 
$H^s(\R^n)$-norm \eqref{Hs_norm} as $G^s$ and the metric that is induced  by
the 
$\overline{H}^s(\R^n)$-norm \eqref{Hsbar_norm} as $\overline{G}^s$.

\section{Vanishing geodesic distance}\label{vanishing}

\begin{theorem}[Vanishing geodesic distance]\label{vanishingthm}
The Sobolev metric of order $s$ induces vanishing geodesic distance on $\on{Diff}_c(M)$ if:
\begin{itemize}
\item
$0\leq s < \frac 12$ and $M$ a Riemannian manifold that is the product of $\R$ with a compact manifold $N$, i.e., $M = \R\times N$.
\item
$s \leq \frac 12$ and  $M$ a Riemannian manifold that is the product of $S^1$ with a compact manifold $N$, i.e., $M = S^1 \times N$.
\end{itemize}
This means that any two diffeomorphisms in the same connected component of
$\Diff_c(M)$ 
can be connected by a path of arbitrarily short $G^s$-length.
\end{theorem}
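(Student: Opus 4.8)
The plan is to construct, between any two diffeomorphisms in the same component of $\Diff_c(M)$, connecting paths of arbitrarily small $G^s$-length. Since the metric is right-invariant, the induced geodesic distance satisfies $\on{dist}(\ph_0,\ph_1)=\on{dist}(\on{id},\ph_1\o\ph_0\i)$, so it suffices to bound $\on{dist}(\on{id},\ph)$ for $\ph$ in the identity component. Moreover, arbitrarily short paths concatenate under the triangle inequality together with right-invariance, so the set $\{\ph:\on{dist}(\on{id},\ph)=0\}$ is a subgroup; an open subgroup of a topological group is also closed, hence equals the whole identity component. Therefore it is enough to produce arbitrarily short paths to $\on{id}$ for a family of diffeomorphisms generating a neighborhood of the identity.

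Next I would reduce everything to a one-dimensional scaling estimate using the product structure. Write $M=L\times N$ with $L\in\{\R,S^1\}$ and $N$ compact, and restrict attention to velocity fields pointing along the $L$-direction, i.e. $u(t)=f(t,x,y)\,\p_x$ where $x$ is the coordinate on $L$. For such fields the norm $\norm{u}_{H^s(M)}$ is comparable to the one-dimensional $H^s(L)$-norm of $f(\cdot,y)$ integrated over the compact factor $N$; invoking Theorems~\ref{thm:multiplication} and~\ref{thm:composition} together with a partition of unity, all estimates can be carried out in charts on $L$, with $N$ contributing only a bounded multiplicative constant. Thus the location of the threshold is governed entirely by scaling in the $L$-direction.

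The engine is a concentration estimate. A velocity field concentrated in an $x$-slab of width $\ep$ and unit amplitude satisfies $\norm{u}^2_{L^2}\sim\ep$ and, for the homogeneous seminorm, $\norm{u}^2_{\dot H^s}\sim\ep^{\,1-2s}$, so the $H^s$-length needed to transport it across a fixed distance scales like $\ep^{\,1/2-s}$. For $s<\tfrac12$ this tends to $0$ as $\ep\to0$. I would therefore realize a generating family of diffeomorphisms of $L$ --- shifts transporting a coordinate interval along $L$, supported in a thin slab --- as flows of such concentrated fields, and then let the slab thickness $\ep\to0$. This produces paths of length $O(\ep^{\,1/2-s})\to0$, giving vanishing distance on the generating family and hence on the whole component; on $M=\R\times N$ this covers exactly $0\le s<\tfrac12$.

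The delicate point, and the main obstacle, is the critical value $s=\tfrac12$ on $M=S^1\times N$, where $\ep^{\,1/2-s}=1$ yields no gain from a single scale. Here one must instead distribute a fixed net motion across a range of dyadic scales $\ep_j=2^{-j}$, $j\le J$, superposing or telescoping the contributions so that the total length behaves like $1/\sqrt{\log J}\to0$. This logarithmic gain is available precisely because $S^1$ is compact --- finite total length and discrete frequencies --- which is why the endpoint is attained for $S^1\times N$ but must be excluded (strict inequality) for $\R\times N$. The two places where real work is required are: verifying that the concentrated flows assemble into the exact target diffeomorphism (controlling the nonlinear passage from the Eulerian velocity field to its time-one flow, and checking that the generating family genuinely fills a neighborhood of $\on{id}$), and making the critical multi-scale summation at $s=\tfrac12$ rigorous with constants uniform in $J$.
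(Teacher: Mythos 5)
Your analytic engine matches the paper's: for $s<\tfrac12$ a velocity field concentrated in a slab of width $\ep$ has $H^s$-norm of order $\ep^{1/2-s}$ (the paper's Lemma~\ref{lem:r} runs exactly this computation for smoothed moving indicators $\one_{[g(t),f(t)]}\star G_\ep$), and at the critical exponent one superposes dyadic scales, $f=\sum_j b_j\ps(2^jx)$ with $b_j=1/N$, getting $\|f\|^2_{H^{1/2}}\le C/N$ while $\|f\|_\infty=1$ (Lemmas~\ref{lem:unbounded} and~\ref{lem:r2}). So your constructions do yield arbitrarily short paths from the identity to certain diffeomorphisms, and you correctly flag the velocity-to-flow bookkeeping as a place where real work is needed.

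The genuine gap is your concluding step. You propose to finish by showing that the zero-distance set is an \emph{open} subgroup, i.e.\ that your short-path-reachable diffeomorphisms generate a neighbourhood of the identity. They do not, and cannot: every field you use points along the $L$-direction and is independent of the $N$-variable, so every diffeomorphism reached --- and every composition of such --- has the form $(x,y)\mapsto(\ph(x),y)$. No map that moves the $N$-factor, or whose $x$-displacement depends on $y$, is ever produced; the generated subgroup has empty interior in $\Diff_0(L\times N)$, so openness fails and the argument stops there. The paper closes exactly this hole by a different mechanism: the set $\Diff_c(M)^{L=0}$ of diffeomorphisms reachable by arbitrarily short paths is shown to be a \emph{normal} subgroup of $\Diff_0(M)$, via the conjugation estimate $\on{Len}(\ps\i\o\ph\o\ps)\le C(\ps)\on{Len}(\ph)$, which rests on the $H^s$-boundedness of multiplication by smooth functions and of composition with fixed diffeomorphisms (Theorems~\ref{thm:multiplication} and~\ref{thm:composition}); then the simplicity of $\Diff_0(M)$ (Epstein, Thurston, Mather) forces any nontrivial normal subgroup to be the whole identity component. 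Some such group-theoretic input (or a vastly more elaborate direct construction reaching all diffeomorphisms near the identity) is indispensable, and it is absent from your proposal. Two secondary inaccuracies: the paper restricts the endpoint $s=\tfrac12$ to $S^1\times N$ not because of ``discrete frequencies'' but because on $\R$ the shift $x\mapsto x+1$ produced by the travelling-wave construction fails to have compact support (the paper conjectures vanishing also holds for $M=\R$, $s=\tfrac12$); and the multiscale gain with $N$ dyadic scales is $1/\sqrt{N}$, not $1/\sqrt{\log J}$.
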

\begin{remark}\label{rem:hom}
Note that this result also implies that the geodesic distance on to the homogenous space $\Diff(S^1)/S^1$ equipped
with a homogenous metric of order $s\leq\tfrac12$ vanishes. In particular the geodesic equation for the metric of order  $s=\frac12$ on this space is the modified Constantin-Lax-Majda equation, see section~\ref{geod_eq_dim1}.
\end{remark}
We will prove this theorem by first constructing paths from the identity to some diffeomorphisms with arbitrary short length and then using the simplicity of the diffeomorphism group to show that any diffeomorphism can be connected to the identity with paths of arbitrary short length.

The restriction to $M\neq\R$ in the case $s=\frac 12$ is due to technical reasons. We believe that the result holds also in this case, however it is more difficult to construct the required vector fields in the non-compact case.

\begin{figure}[ht]
		\centering
		\def\svgwidth{0.6\textwidth}
\executeiffilenewer{images/path2.svg}{images/path2.pdf}%
{inkscape -z -D --file=images/path2.svg %
--export-pdf=images/path2.pdf --export-latex}%
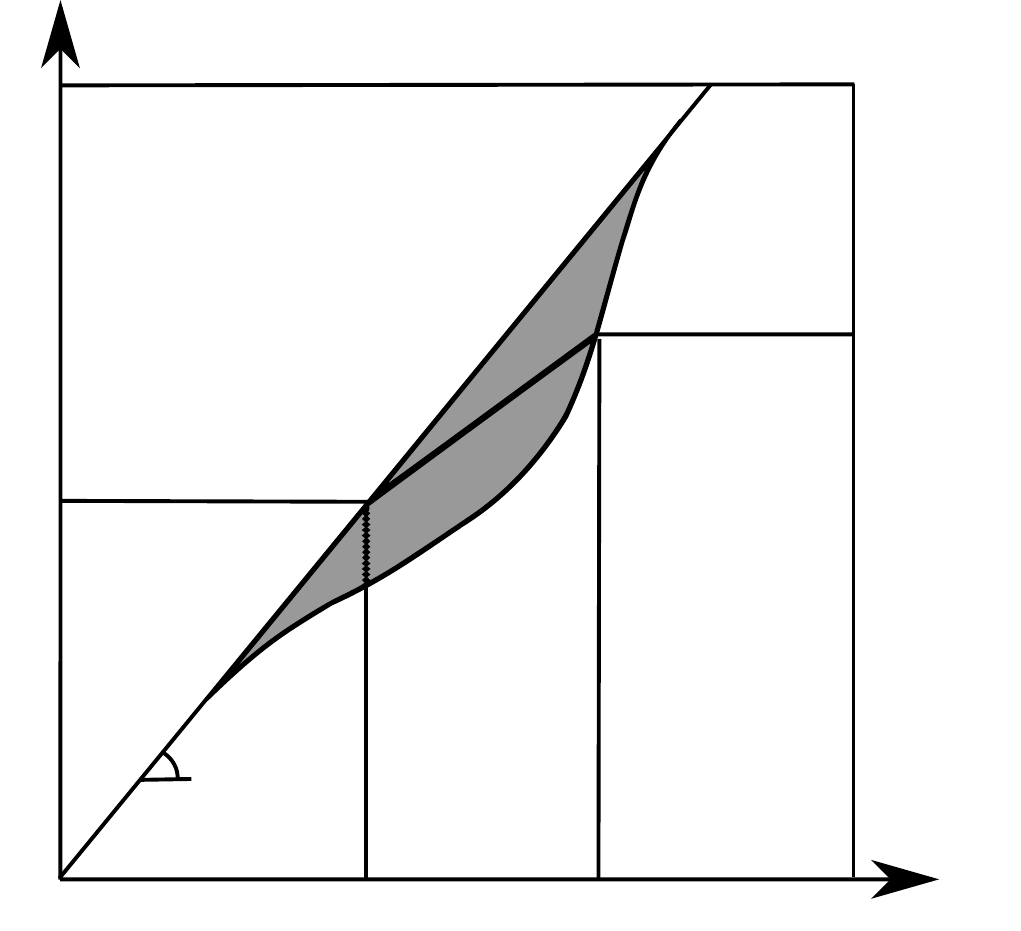%

		\caption{Sketch of the vector field $u(t,x)$. The gray area represents the support of $u$ and one integral curve of $u(\cdot,x)$ is shown.}
		\label{fig_path2}
\end{figure}

\begin{lemma}\label{lem:r}
Let $\ph \in \Diff_c(\R)$ be a diffeomorphism satisfying $\ph(x) \geq x$. 
Then for $0 \leq s < \tfrac12$ the geodesic distance between $\ph$ and
$\on{id}$ 
with respect to the $H^s$-metric in $\Diff_c(\R)$ is zero,
i.e., $\ph$ can be connected to the identity by a path of arbitrarily short
$G^s$-length.
\end{lemma}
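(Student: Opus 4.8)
The plan is to exhibit, for each $\ep>0$, a smooth path $t\mapsto\ph_t$ in $\Diff_c(\R)$ with $\ph_0=\on{id}$ and $\ph_1=\ph$ whose $G^s$-length is at most $\ep$. By right invariance the length of such a path is
\[ L(\ph_\bullet)=\int_0^1\sqrt{G^s_{\ph_t}(\p_t\ph_t,\p_t\ph_t)}\,dt=\int_0^1\|u_t\|_{H^s(\R)}\,dt,\qquad u_t:=\p_t\ph_t\o\ph_t\i, \]
so the task reduces to choosing a flow $\ph_t$ realizing $\ph$ at $t=1$ for which the time integral of the $H^s$-norm of the Eulerian velocity $u_t$ is small. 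The hypothesis $\ph(x)\ge x$ is used here: it lets us take a flow that moves every point monotonically to the right, so that $u_t\ge0$ throughout and no point ever has to backtrack.

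The mechanism that drives the length to zero is spatial concentration of the velocity, and this is exactly where $s<\tfrac12$ enters. For a fixed profile $\ps$ and its horizontal compression $\ps_\de(x)=\ps(x/\de)$ one has $\|\ps_\de\|_{\dot H^s}=\de^{(1-2s)/2}\|\ps\|_{\dot H^s}$ and $\|\ps_\de\|_{L^2}=\de^{1/2}\|\ps\|_{L^2}$; both exponents are positive precisely when $s<\tfrac12$ (equivalently, a jump discontinuity still lies in $H^s$ for $s<\tfrac12$). Thus if the motion is organized so that, at each instant, the velocity field $u_t$ is large but supported in a spatial window of small width $\de$, each $\|u_t\|_{H^s}$ carries a gain factor $\de^{(1-2s)/2}\to0$.

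Concretely I would realize $\ph$ by a \emph{compression wave}: a velocity window of width $\de$ that sweeps once across $\on{supp}(\ph-\on{id})$ from left to right, calibrated so that as the window passes a point $x$ it transports $x$ forward by exactly $\ph(x)-x$, after which $u_t$ vanishes near $x$ and the point stays put. Prescribing the accumulated displacement in this way handles an arbitrary $\ph$ with $\ph\ge\on{id}$ at once, and estimating $\int_0^1\|u_t\|_{H^s}\,dt$ by the compression scaling above bounds the length by $C\,\de^{(1-2s)/2}$, which tends to $0$ as $\de\to0$.

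The main obstacle is the engineering of this vector field. One must produce an \emph{explicit} $u_t$ whose time-one flow is exactly $\ph$ (not merely close to it), whose maps $\ph_t$ remain genuine orientation-preserving diffeomorphisms with compact support, and for which the full inhomogeneous $H^s$-norm — including the $L^2$ part and the transient as the window turns on and off at the ends of the support — obeys the stated $\de^{(1-2s)/2}$ bound. The delicate point is to balance the two competing demands, a large velocity (needed to carry points the prescribed distance) against a thin support (needed to keep $\|u_t\|_{H^s}$ small), while avoiding shock formation where a point would \textbf{surf} the front and be displaced too far. That $s<\tfrac12$ is sharp, and that the construction is cleaner for $M\ne\R$ at the endpoint $s=\tfrac12$, both reflect the failure of this scaling gain at the threshold $s=\tfrac12$ in the non-compact setting.
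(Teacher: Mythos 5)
Your strategy is precisely the paper's own: a thin compression wave of velocity sweeping once across $\on{supp}(\ph-\on{id})$, with the smallness of the length coming from the fact that the $H^s$-norm of a profile of width $a$ scales like $a^{1/2}+a^{(1-2s)/2}$, which degenerates exactly for $s<\tfrac12$. The gap is that your proposal stops where the paper's proof actually lives: you state that one must ``produce an explicit $u_t$ whose time-one flow is exactly $\ph$'' and list the difficulties, but you do not produce it. The paper's construction is concrete and worth recording. Take $u(t,x)=\one_{[g(t),f(t)]}\star G_\ep(x)$, a window of \emph{unit} height, with leading edge $f(t)=t\tan\al$ for $\al>\tfrac\pi4$ and trailing edge determined by the target diffeomorphism through $g^{-1}(x)=x-(1-\cot\al)\ph^{-1}(x)$. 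Since the edges travel at speed $\tan\al>1$ while transported points travel at speed $1$, each point is overtaken by the front, rides the wave (explicitly $\ph(t,x)=t+(1-\cot\al)x$ during the ride), and is released by the trailing edge after time exactly $\ph(x)-x$, so the time-$T$ map is exactly $\ph$. Moreover $f^{-1}(x)-g^{-1}(x)=-(1-\cot\al)\bigl(x-\ph^{-1}(x)\bigr)$, so letting $\al\to\tfrac\pi4$ makes $\|f-g\|_\infty\to0$ while the total time $T$ stays bounded, giving $\on{Len}(\ph)\le T\bigl(C_1\|f-g\|_\infty+C_2\|f-g\|_\infty^{1-2s}\bigr)^{1/2}\to0$.

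Note also that two of the difficulties you flag are resolved differently than you anticipate. The tension is not ``large velocity versus thin support'': the velocity never exceeds $1$, and the prescribed displacement is accumulated through long exposure, which is compatible with a thin window because the \emph{relative} speed of the edges with respect to the riding points is the small number $\tan\al-1$. Likewise, ``surfing the front'' is not a shock to be avoided but is the transport mechanism itself; what must be controlled is when the trailing edge terminates the ride, and that is exactly what the formula for $g^{-1}$ encodes via $\ph^{-1}$ (in particular the window width is not a constant $\de$ but varies with position, proportional to $x-\ph^{-1}(x)$). Finally, the mollification by $G_\ep$, needed so that the path stays in $\Diff_c(\R)$, perturbs the endpoint of the flow; the paper restores the endpoint by an $O(\ep)$ adjustment of $g$, which does not disturb the bound on $\|f-g\|_\infty$. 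Without these steps carried out, your text describes the mechanism of the lemma rather than proving it; with them, it coincides with the paper's argument.
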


\begin{proof}
The idea of the proof is as follows. Given the diffeomorphism $\ph$ with $\ph(x)
\geq x$ we will construct a family of paths of the form
\[ u(t,x) = \one_{[g(t), f(t)]} \star G_\ep(x) \]
such  that  its  flow  $\ph(t,x)$  satisfies $\ph(0,x) = x$ and
$\ph(T,x)=\ph(x)$. In a second step we will show, that when 
$\|f-g\|_\infty$   is   sufficiently   small,   so   is  also  the
$H^s$-length of the path $\ph(t,x)$.
Here $G_\ep (x) = \frac1{\ep}G_1(\frac{x}{\ep})$ is a smoothing kernel, where
$G_1$ is a smooth 
bump function.

So let us construct the vector field $u(t,x)$. If we could disregard continuity,
we could choose an angle $\al > \frac{\pi}{4}$ and set
\begin{align*}
f(t) &= t \tan \al \\
g^{-1}(x) &= x - (1-\cot \al) \ph^{-1}(x)\;.
\end{align*}
The flow $\ph(t,x)$ of the unsmoothed vector field $u(t,x) = \one_{[g(t),
f(t)]}(x)$ satisfies
\[ \ph(t,x) = x + \int_0^t u(s, \ph(s,x)) ds\;.\]
In this case we can write down the explicit solution, which is given by
\[ \ph(t,x) = 
\begin{cases}
x,& t < \cot \al \\
t + (1-\cot \al) x,& x \cot \al \leq t \leq \ph(x) - (1-\cot \al) x \\
\ph(x),& t > \ph(x) - (1-\cot \al) x
\end{cases}
\]
and we see that it satisfies the boundary conditions. We also have the relation
\[ f^{-1}(x) - g^{-1}(x) = -(1-\cot \al)(x - \ph^{-1}(x)) \]
which implies that by choosing $\al$ sufficiently close to $\frac\pi 4$ we can
make $\| f-g\|_\infty$ as small as necessary. 
By replacing $u$ with the smoothed vector field $\one_{[g(t), f(t)]} \star
G_\ep(x)$ we change the endpoint of the flow. 
However, by changing $g$ suitably we can regain control of the endpoint. 
The necessary changes will be of order $\ep$ and hence we don't loose control 
over the difference $\| f-g\|_\infty$, which will be necessary later on.

Now we compute the norm of this vector field. Let $u(t,x)$ have the form $u(t,x)
= \one_{[g(t), 
f(t)]} \star G_\ep(x)$, where $f(t)$ and $g(t)$ are smooth functions which
coincide off a bounded interval. 
To compute the $H^s$-norm of $u$, we first need to compute its Fourier transform
\begin{align*}
\F \one_{[g(t), f(t)]}(\xi) &= \frac{1}{\sqrt{2\pi}} \int_{g(t)}^{f(t)} e^{i\xi
x} \ud x = \frac{1}{\sqrt{2\pi}} \left. \frac{e^{i \xi x}}{i \xi} \right
\vert_{x=g(t)}^{x=f(t)} = \frac{1}{\sqrt{2\pi}} \frac{e^{i \xi f(t)} - e^{i\xi
g(t)}}{i \xi} \\
&= \frac{1}{\sqrt{2\pi}} \frac{1}{\xi} e^{i \xi \frac{f(t)+g(t)}{2}} \frac{e^{i
\xi \frac{f(t)-g(t)}{2}} - e^{-i\xi \frac{f(t)-g(t)}{2}}}{i \xi} \\
&= \frac{2}{\sqrt{2\pi}} \frac{1}{\xi} e^{i \xi \frac{f(t)+g(t)}{2}} \sin(\xi
\frac{f(t)-g(t)}{2})\;.
\end{align*}
Setting $a = \frac{f(t)-g(t)}{2}$ we can now compute the norm
\begin{align*}
\| \xi^s \F u \|^2_{L^2} &= \| \xi^s \F \one_{[g(t), f(t)]} \F G_\ep \|^2_{L^2}
\\
&= \int_\R \frac{2}{\pi} \frac{1}{|\xi|^{2-2s}} \sin^2(a\xi) (\F G_1(\ep\xi))^2
d\xi \\
&\leq \frac{2}{\pi} \| \F G_1 \|^2_{\infty} \int_\R \frac{\sin^2(a
\xi)}{|\xi|^{2-2s}} d\xi \\
&\leq \frac{2}{\pi} \| \F G_1 \|^2_{\infty} \int_\R a^{1-2s}
\frac{\sin^2(\xi)}{|\xi|^{2-2s}} d\xi\;.
\end{align*}
We get $\| u \|_{L^2}^2$ by setting $s=0$ in the above calculation. 
We see that for $s < \frac{1}{2}$ the $H^s$-norm of $u(t,.)$ is bounded by
\[ \| u(t,.) \|_{H^s}^2 \leq C_1 |f(t) - g(t)| + C_2 |f(t) - g(t)|^{1-2s}\;.\]
Now, putting everything together
\begin{align*}
\on{Len}(\ph)^2 &= \left(\int_0^T \| u(t,.)\|_{H^s} dt\right)^2 \leq T \int_0^T
\| u(t,.)\|^2_{H^s} dt \\
&\leq T^2 \left(C_1 \| f - g \|_\infty + C_2 \| f - g \|_\infty^{1-2s}\right)\;.
\end{align*}
Since  the  geodesic  length is defined as the infimum over all
paths  and  since  we have shown in the first part of the proof
that  by  choosing  the angle $\alpha$ and the smoothing factor
$\ep$,  we  can control the norm $\|f-g\|_\infty$, the proof is
complete.
\end{proof}

In the case $s=\frac 12$ we will work on the circle $S^1$. 
Using a construction similar to that in Lemma \ref{lem:r} 
we will construct arbitrary short paths from the identity to the shift $\ph(x) = x+1$. 
The following lemma supplies us with functions that have small $H^{1/2}$-norm and large $L^\infty$-norm at the same time. 
We cannot use step functions as in the above proof, because they are not in $H^{1/2}(S^1)$.

\begin{lemma}\label{lem:unbounded}
Let $\psi(x)$ be a  non-negative, compactly supported $C^\infty$-function on
$\R$ and $(b_j)_{j=0}^{\infty}$ a non-increasing sequence of non-negative
numbers, with 
$\sum b_j^2<\infty$.    Then the $H^{1/2}$-norm of the function $f(x):= \sum b_j
\psi(2^j x)$ is bounded by
$$\|f\|^2_{H^{1/2}}\leq C\sum_{j=0}^\infty b_j^2\;.$$
\end{lemma}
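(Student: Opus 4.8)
The plan is to exploit the dyadic scaling structure of the function $f(x)=\sum_j b_j\psi(2^jx)$ directly on the Fourier side, using the equivalent norm $\|f\|^2_{\overline H^{1/2}}=\|f\|^2_{L^2}+\||\xi|^{1/2}\F f\|^2_{L^2}$ from \eqref{Hsbar_norm}, which by the remarks in \ref{boundedGeometry} controls the $H^{1/2}$-norm up to a constant. The key observation is that the $j$-th summand is a rescaling of the fixed bump $\psi$, and rescaling interacts simply with the homogeneous seminorm $\||\xi|^{1/2}\F\,\cdot\,\|_{L^2}$. First I would record how the $\overline H^{1/2}$-seminorm scales: if $\psi_j(x):=\psi(2^jx)$, then $\F\psi_j(\xi)=2^{-j}\F\psi(2^{-j}\xi)$, and substituting $\eta=2^{-j}\xi$ in the seminorm integral $\int_\R|\xi|\,|\F\psi_j(\xi)|^2\,d\xi$ produces a factor that is \emph{independent of $j$} — the homogeneous $\dot H^{1/2}$-seminorm in one dimension is exactly scale invariant. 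Thus each building block $\psi_j$ has the same $\dot H^{1/2}$-seminorm, and this is precisely the scaling that makes $s=\tfrac12$ the borderline case.

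The main obstacle is that $f$ is a \emph{sum} of these blocks, so $\||\xi|^{1/2}\F f\|^2_{L^2}$ is not simply the sum of the individual seminorms; the cross terms $\langle |\xi|^{1/2}\F\psi_j,\,|\xi|^{1/2}\F\psi_k\rangle$ must be controlled. The way I would handle this is to show the pieces are \emph{almost orthogonal} in the $\dot H^{1/2}$ inner product: because $\F\psi_j$ is concentrated (in an $L^2$-weighted sense) near frequency $|\xi|\sim 2^j$, the overlap between the $j$-th and $k$-th pieces decays geometrically in $|j-k|$. Concretely, I would bound the inner product of blocks $j$ and $k$ (say $j\le k$) by estimating $\int_\R|\xi|\,|\F\psi(2^{-j}\xi)|\,|\F\psi(2^{-k}\xi)|\,d\xi\cdot 2^{-j-k}$ and using the rapid (Schwartz) decay of $\F\psi$ to extract a factor like $2^{-\delta|j-k|}$ for some $\delta>0$. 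A clean alternative that avoids delicate frequency-localization estimates is a Littlewood–Paley / Besov argument: since $H^{1/2}=B^{1/2}_{22}$ (as noted in the excerpt), and $\psi(2^j\cdot)$ is a dilate living essentially at dyadic frequency scale $2^j$, the $B^{1/2}_{22}$-norm is comparable to the $\ell^2$-sum $\big(\sum_j 2^{j}\|\widehat{\psi_j}\|^2_{L^2(\text{scale }2^j)}\big)^{1/2}$, each term of which contributes a uniform constant times $b_j^2$.

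Putting this together, once almost-orthogonality is established one applies a Schur-type / Cauchy–Schwarz summation:
\[
\||\xi|^{1/2}\F f\|^2_{L^2}\;\le\;\sum_{j,k}|b_j b_k|\,|\langle|\xi|^{1/2}\F\psi_j,|\xi|^{1/2}\F\psi_k\rangle|\;\le\;C\sum_{j,k}|b_j b_k|\,2^{-\delta|j-k|}\;\le\;C'\sum_j b_j^2,
\]
where the last step uses Young's inequality for the convolution against the summable kernel $2^{-\delta|\cdot|}$ (this is where $\sum b_j^2<\infty$ enters, and the monotonicity of $b_j$ is a convenience but not essential). The $L^2$-part is easier and in fact gives room to spare: by the same scaling, $\|\psi_j\|_{L^2}^2=2^{-j}\|\psi\|_{L^2}^2$, so $\|f\|_{L^2}^2\le(\sum_j b_j\|\psi_j\|_{L^2})^2\le\|\psi\|_{L^2}^2(\sum_j 2^{-j/2}b_j)^2\le C\sum_j b_j^2$ after Cauchy–Schwarz against the summable geometric weights $2^{-j/2}$. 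Adding the two contributions yields $\|f\|^2_{H^{1/2}}\le C\sum_{j=0}^\infty b_j^2$, which is the claimed bound. I expect the almost-orthogonality estimate to be the technical heart; everything else is scaling bookkeeping and a summation lemma.
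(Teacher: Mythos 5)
Your proposal is correct, but it takes a genuinely different route from the paper: the paper does not prove the lemma at all, its ``proof'' is a one-line citation to step 4 of the proof of Theorem 13.2 in \cite{Triebel2001}, whereas you give a self-contained Fourier-side argument. The technical heart you identified, almost orthogonality of the dilates $\psi_j=\psi(2^j\cdot)$ in the $\dot H^{1/2}$ inner product, does go through exactly as you sketch: for $j\le k$,
\begin{equation*}
\int_\R |\xi|\,\F\psi_j(\xi)\,\overline{\F\psi_k(\xi)}\,\ud\xi
\;=\;2^{j-k}\int_\R |\eta|\,\F\psi(\eta)\,\overline{\F\psi(2^{j-k}\eta)}\,\ud\eta ,
\end{equation*}
and since $\F\psi$ is Schwartz the last integral is bounded by $\|\F\psi\|_{L^\infty}\,\|\,|\eta|\,\F\psi(\eta)\|_{L^1}$ uniformly in $k-j$, so the cross terms decay like $2^{-|j-k|}$ (you get $\delta=1$), and the Schur/Young summation plus your $L^2$ estimate finish the proof. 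One caveat about your heuristic: $\psi_j$ is \emph{not} frequency-localized to the annulus $|\xi|\sim 2^j$; since $\psi\ge 0$, the modulus of $\F\psi_j$ is spread over the ball $|\xi|\lesssim 2^j$ and is maximal at $\xi=0$, so the blocks overlap heavily at low frequencies. It is only the weight $|\xi|^{1/2}$, vanishing at $\xi=0$, that concentrates the relevant mass on the annulus --- your parenthetical ``in an $L^2$-weighted sense'' is doing real work there, and the displayed computation is what makes it rigorous. (A minor point you leave implicit: justify exchanging $\F$ with the infinite sum via convergence of partial sums in $L^2$ and Fatou.) Comparing the two routes: the citation buys brevity and places the lemma inside the general Besov-space theory, where such lacunary constructions are standard; your argument buys transparency --- it exhibits the exact scale invariance of the one-dimensional $\dot H^{1/2}$-seminorm as the reason $s=\tfrac12$ is critical, produces an explicit constant, and shows that the monotonicity assumption on $(b_j)$ is superfluous for this upper bound.
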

\begin{proof}
This result is shown in step 4 of the proof of theorem 13.2
in~\cite{Triebel2001}.
\end{proof} 

The main difference between Lemma \ref{lem:r} and the following Lemma is that on $S^1$ we don't have to worry about the diffeomorphisms having compact support. On $\R$ the diffeomorphism $\ph(x) = x+1$ is not element of $\on{Diff}_c(\R)$ and we would have to replace it by $\ph(x) = x + c(x)$, where $c(x)$ is some function with compact support. This makes working on the circle much easier.

\begin{lemma}\label{lem:r2}
Let $\ph \in \Diff(S^1)$ be the shift by $1$, i.e. $\ph(x)=x+1$. 
Then  the geodesic distance between $\ph$ and $\on{id}$ 
with respect to the $H^{1/2}$-metric in $\Diff(S^1)$ is zero.
\end{lemma}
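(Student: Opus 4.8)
The plan is to mimic the construction of Lemma~\ref{lem:r}, building an explicit time-dependent vector field $u(t,\cdot)$ on $S^1$ whose flow carries $\on{id}$ to the shift $\ph(x)=x+1$, but replacing the characteristic-function building blocks by the functions supplied by Lemma~\ref{lem:unbounded}. The reason for this substitution is exactly the remark preceding the lemma: step functions are not in $H^{1/2}(S^1)$ (the critical exponent $s=\tfrac12$ is precisely where indicator functions fail to lie in $H^s$), so to obtain a finite $H^{1/2}$-norm I must use functions that are continuous yet can be made large in $L^\infty$ while keeping the $H^{1/2}$-norm small. First I would fix a non-negative bump $\psi\in C_c^\infty(\R)$ and, for a parameter $N$, consider the function $f_N(x)=\sum_{j=0}^{N} b_j\,\psi(2^j x)$ with a suitable choice of coefficients $(b_j)$; by Lemma~\ref{lem:unbounded} its $H^{1/2}$-norm is controlled by $\big(C\sum_{j=0}^N b_j^2\big)^{1/2}$.

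The key idea is to exploit the mismatch between the $L^\infty$-norm and the $H^{1/2}$-norm of $f_N$. Choosing, for instance, $b_j=\tfrac1{j+1}$ makes $\sum b_j^2$ converge, so $\|f_N\|_{H^{1/2}}$ stays uniformly bounded as $N\to\infty$, while $\|f_N\|_\infty$ can be taken as large as desired (the partial sums of the harmonic series at the common peak of the rescaled bumps diverge). This gives a vector field that produces a large displacement of the flow at very modest $H^{1/2}$-cost. I would then build the motion in two stages analogous to the ``slanted strip'' picture of Lemma~\ref{lem:r}: use $u$ to transport mass so that over a short time interval the flow advances by the full amount $1$, and arrange the spatial profile via $f_N$ so that the instantaneous norm $\|u(t,\cdot)\|_{H^{1/2}}$ is small. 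Concretely, writing $\on{Len}(\ph)^2\le T\int_0^T\|u(t,\cdot)\|_{H^{1/2}}^2\,dt$ by Cauchy--Schwarz, I want to show that as $N\to\infty$ the product of the time $T$ needed and the integrated squared norm tends to $0$: the displacement per unit time scales like $\|f_N\|_\infty$ while the cost scales like $\|f_N\|_{H^{1/2}}^2$, and the former grows without bound while the latter stays bounded, so the total length can be driven to zero.

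There is one structural simplification worth flagging, made explicit in the paragraph before the lemma: on $S^1$ the target diffeomorphism $x\mapsto x+1$ is a genuine element of $\Diff(S^1)$ and requires no compactly supported cutoff, unlike the situation on $\R$. This removes the bookkeeping with $\|f-g\|_\infty$ and the smoothing-kernel corrections that complicated Lemma~\ref{lem:r}; I can work with a single spatial profile $f_N$ and let the flow run for the appropriate time, without worrying about matching a prescribed compactly supported endpoint. The periodicity also means I should interpret $\psi(2^j x)$ and $f_N$ as functions on $S^1=\R/\Z$, which is consistent since for large $j$ the rescaled bumps are supported in a small arc.

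The main obstacle, and the step I would spend the most care on, is the simultaneous control of the flow's \emph{endpoint} and its \emph{length}. Lemma~\ref{lem:unbounded} is a purely analytic norm estimate; it does not by itself guarantee that the flow of the associated vector field reaches exactly $x+1$. I expect to need a quantitative lower bound on how far the flow advances in terms of $\int_0^T\|u(t,\cdot)\|_\infty\,dt$ (roughly, a reverse estimate showing the displacement is at least of order $\|f_N\|_\infty\cdot T$), together with an argument that the spatial shape can be normalized so the \emph{entire} circle, not just the peak region, is translated by $1$ — or else a two-step decomposition reducing the general shift to localized pushes. Reconciling ``flow reaches $x+1$'' with ``$H^{1/2}$-length $\to 0$'' is where the trade-off between $\|f_N\|_\infty\to\infty$ and $\|f_N\|_{H^{1/2}}$ bounded must be turned into a rigorous vanishing-length conclusion, precisely as the analogous step closed the argument in Lemma~\ref{lem:r}.
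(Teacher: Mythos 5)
You have assembled the right ingredients---Lemma~\ref{lem:unbounded}, a flow-based construction, the Cauchy--Schwarz bound $\on{Len}(\ph)^2\le T\int_0^T\|u(t,\cdot)\|^2_{H^{1/2}}\,dt$---but the actual content of the proof is the step you explicitly defer, and nothing in your proposal supplies it: you never produce a vector field whose flow at a definite time is \emph{exactly} $x\mapsto x+1$, with simultaneous control of its length. In the paper this is done by a traveling wave $u(t,x)=\la f(t-x)$ with $0\le\la<1$, where $f$ comes from Lemma~\ref{lem:unbounded}. The mechanism is \emph{surfing}, not speed: the wave moves at speed $1$ and the particles move slower ($\|u\|_\infty\le\la<1$), but a particle swept by the wave is displaced by $I(\la)=\int_{-1}^{1}\frac{\la f(t)}{1-\la f(t)}\,dt$, the same amount for every point (paper's Claim A); $I$ maps $[0,1)$ onto $[0,\infty)$ because $f$ is concave at its peak (Claim B), so $\la$ can be chosen with $I(\la)=1$; the passage time is $T_{\on{shift}}=2+I(\la)$ (Claim C); and a wrap-around bookkeeping on $S^1$ (Claim D) shows the flow at time $T_{\on{end}}=T_{\on{shift}}+2\pi-2$ is exactly the shift by $1$. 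Without something of this kind, your central claim that ``the displacement per unit time scales like $\|f_N\|_\infty$'' is not a proof---and it is in fact false for all points except those near the peak: $f_N$ is comparable to $\|f_N\|_\infty$ only on a neighbourhood of size about $2^{-N}$ of the common peak, generic points see a field of size $O(1)$, and points outside $\on{supp}f_N$ do not move at all, so no reverse estimate ``displacement $\gtrsim\|f_N\|_\infty\cdot T$'' can hold uniformly on the circle.

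Your choice of coefficients also makes the missing step strictly harder than the paper's. The paper takes $b_j=\tfrac1N$ for $j<N$, so $\|f\|_\infty$ stays fixed at $1$ while $\|f\|^2_{H^{1/2}}\le C/N\to0$; then the whole flow analysis (choice of $\la$, $T_{\on{shift}}=3$, $T_{\on{end}}=1+2\pi$) is independent of $N$, and the energy bound $E(u)\le CT_{\on{end}}/N\to0$ alone drives the length to zero. You take $b_j=\tfrac1{j+1}$, so $\|f_N\|_{H^{1/2}}$ is merely bounded and you are forced to make $T\to0$, hence transport speeds $\to\infty$; to move \emph{every} point by $1$ in time $o(1)$ you would need a traveling wave of speed $v$ slightly above $\|f_N\|_\infty\sim\log N$, tuned so that the surfing integral $\int f_N/(v-f_N)\,da$ equals $1$---that is, all of the paper's machinery again, now with $N$-dependent speeds and times, not a shortcut around it. Finally, your fallback of a ``two-step decomposition into localized pushes'' points in a problematic direction: at $s=\tfrac12$ indicator-type arguments fail, and producing \emph{compactly supported} short paths is precisely the difficulty the paper avoids by working on $S^1$ with the rigid shift (and records as an open conjecture for $M=\R$, $s=\tfrac12$).
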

\begin{proof}
We will prove the lemma by constructing a sequence of vector fields with arbitrary small 
$H^{1/2}$-norms, whose flows at time $t=T_{\on{end}}$ will be $\ph(T_{\on{end}},x)=x+1$. 
First we apply Lemma \ref{lem:unbounded} with $b_j = \frac 1N$ for $j=0,\ldots,N-1$ and 0 otherwise. 
By doing so we obtain $\| f \|^2_{H^{1/2}} \leq C\frac 1N$, while $\|f\|_\infty = 1$. For the basic function $\ps(x)$ we choose $\ps(x)=e^{\frac{1}{1-|x|^2}}$. Note that $\on{supp}(\ps) \subseteq [-1,1]$ and that $\ps$ is concave in a neighborhood around 0. Since each $f$ is a finite sum of $\ps(2^j x)$, these properties hold also for $f$.

We define the vector field $$u(t,x)=\la f(t-x) \quad\text{with}\quad 0 \le \la < 1$$ 
for $t \in [0, T_{\on{end}}]$, where $T_{\on{end}}$ will be specified later. 
The energy of this path is bounded by
\[ E(u) = \int_0^{T_{\on{end}}} \| u(t,.)\|^2_{H^{1/2}} \ud t \leq C T_{\on{end}} \frac 1N \]
and hence can be made as small as necessary. It remains to show that the flow of this vector field 
at time $t=T_{\on{end}}$ is indeed $\ph(T_{\on{end}},x) = x+1$. 

We do this in several steps. First we consider this vector field defined on all of $\R$ with time 
going from $-\infty$ to $\infty$. The initial condition for the flow is $\ph(-\infty,x) = x$. Since 
$u(t,x)$ has compact support in $x$, this doesn't cause any analytical problems. As long as $\la < 
1$ each integral curve of $u$ will leave the support of $u$ after finite time. 
Therefore we can consider $\ph(\infty,x)$ to be the endpoint of the flow. 
Next we will establish that $\ph(\infty,x) = x + S$ is a uniform shift, that is independent of $x$. 
Then we show that by appropriately choosing $\la$ we can control the amount of shifting, in 
particular we can always obtain $S=1$. 
Then we find bounds for the time each integral curve spends in the support of $u$. 
By showing that this time is only dependent on $S$, but not on the specific form of $f$ or $\ps$, we will know that $T_{\on{end}}$ doesn't grow larger as we let $N\to\infty$. In the last step we go back to the circle, define $T_{\on{end}}$, start the flow at time $t=0$ and show that the resulting flow is a shift by 1 at time $T_{\on{end}}$. This will conclude the proof.

The flow $\ph(t,x)$ of $u(t,x)$ is given by the equation
\[ \p_t \ph(t,x) = u(t, \ph(t,x)) \]
with the initial condition $\ph(-\infty, x) = x$. Define the function $a_x(t) = t - \ph(t,x)$. Because $\p_t a_x(t) = 1 - \la f(a_x(t)) >0$ the function $a_x(.)$ is a diffeomorphism in $t$ for each fixed $x$. Since $\on{supp}(f) \subseteq [-1,1]$, we have $\p_t \ph(t,x) \neq 0$ only for $t \in [a_x\i(-1), a_x\i(1)]$. Let us define $T_{\on{shift}} = a_x\i(1) - a_x\i(-1)$ to be the time necessary for the flow to pass through the vector field $u$.

{\bf Claim A.} {\em $T_{\on{shift}}$ is independent of $x$.} \\
This follows from the following symmetries of the flow $\ph(t,x)$ and the map $a_x(t)$. We have
\[ \ph(t,x) = \ph(t-(x-y),y) + x-y \]
and
\[ a_x(t) = a_y(t-(x-y))\;. \]
To prove the first identity assume $x>y$ and note that at time $t_0 = y-1$ we have
\[ \ph(y-1,x)=x = x + \ph(y-1-(x-y),y)-y \]
since at time $y-1-(x-y)$ the flow $\ph(t,y)$ still equals $y$. Now differentiate to see that both functions satisfy the same ODE. The second identity is an immediate consequence of the first one. To prove the claim that $T_{\on{shift}}$ is independent of $x$, we will show that
\[ \p_x a_x\i(t) = 1\;. \]
Start with $a_x(a_x\i(t)) = t$, use the symmetry relation $a_0(a_x\i(t)-x) = t$ and differentiate with respect to $x$ to obtain
\[ \p_t a_0(a_x\i(t)-x) (\p_xa_x\i(t)-1) = 0\;, \]
which concludes the proof of the claim.

For each $x$, the flow of the vector field performs a shift $[a_x\i(-1), a_x\i(1)]$ given by
\[ \ph(\infty,x) = x + \int_{a_x\i(-1)}^{a_x\i(1)} \la f(t-\ph(t,x)) \ud t = x + \int_{-1}^{1} \frac{\la f(t)}{1-\la f(t)} \ud t\;. \]
Define 
$$
I(\la) = \int_{-1}^{1} \frac{\la f(t)}{1-\la f(t)} \ud t
$$ 
to be the amount of shifting that is taking place as a function of $\la$. 
Note that $I(\la)$ is smooth in $\la\in[0,1)$ with $I(0)=0$ and $I'(0)>0$.
We claim that we can always choose $\la$ close enough to 1, to obtain any shift necessary.

{\bf Claim B.} {\em $I:[0,1)\to \mathbb R_{\ge 0}$ is a diffeomorphism}  \\
Obviously, $\p_\la I(\la)>0$ and $I(0)=0$. It remains to show that 
$\lim_{\la\to 1}I(\la) = \infty\;$.
Each $f$ that we choose in our construction is concave in some small neighborhood around 0. So choose $a>0$, such that for $t \in [0,a]$ we have $f(t)\geq \frac 12$ and $f(t) \geq 1-ct$ for some constant $c$. Then we can estimate the integral by
\begin{align*}
I(\la) &\geq \int_0^a \frac{\la f(t)}{1-\la f(t)} \ud t \geq \frac \la 2 \int_0^a \frac{1}{1-\la + \la c t} \ud t \\
&\geq \frac \la 2 \log(1-\la+\la c t)\big|_{t=0}^{t=a} \\
&\geq \frac \la 2 \log\left(1 + \frac{\la c a}{1-\la} \right)\;.
\end{align*}
From this we can see that $I(\la)$ grows towards infinity.

Note that a similar calculation shows that $\p_\la I(\la)>a$ for some $a\geq 0$.

\begin{figure}[ht]
		\centering
		\def\svgwidth{0.8\textwidth}
\executeiffilenewer{images/path.svg}{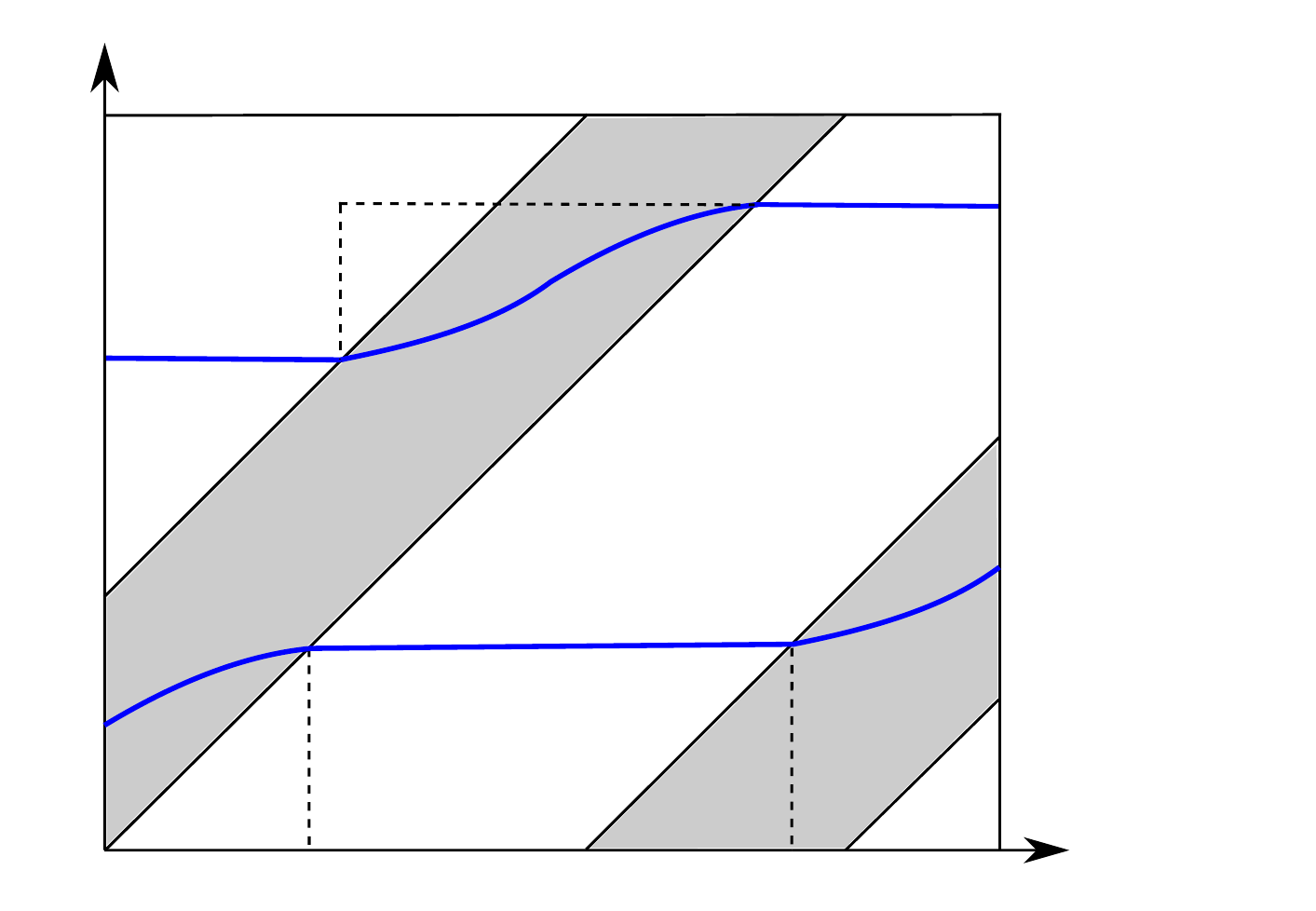}%
{inkscape -z -D --file=images/path.svg %
--export-pdf=images/path.pdf --export-latex}%
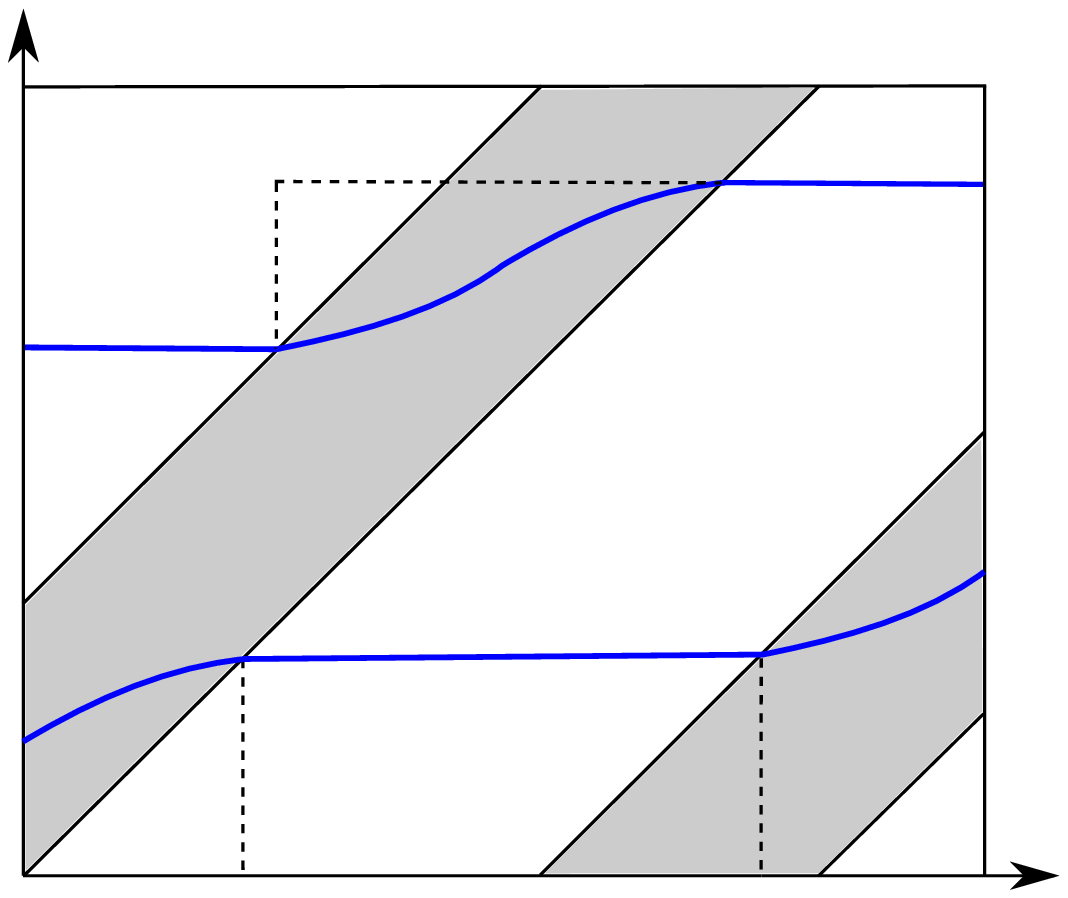%

		\caption{Sketch of the vector field $u(t,x)$. The gray area represents the support of $u$ and the blue curves are integral curves of $u(\cdot,x)$.}
		\label{fig:sketch}
\end{figure}

{\bf Claim C.} {\em We have control over the time necessary to induce a shift $I(\la)$, via  $T_{\on{shift}}= 2+I(\la)\;$. } \\
We had above $T_{\on{shift}} = a_x\i(1) - a_x\i(-1)$ and also $\p_t a_x\i(t) = \frac{1}{1-\la f(t)}\;$. Hence
\[ T_{\on{shift}} = \int_{-1}^1 \frac{1}{1-\la f(t)} \ud t = \int_{-1}^1 1 + \frac{\la f(t)}{1-\la f(t)} \ud t = 2 + I(\la)\;. \]
This concludes the proof of the claim.

Now we choose $\la$ such that $I(\la)=1$ and we define the flow $\ph(t,x)$ on the circle with period $2\pi$ for time from 0 to $T_{\on{end}}=T_{\on{shift}} + 2\pi - 2\;$.

{\bf Claim D.} {\em The endpoint of the resulting flow is a constant shift $$\ph(T_{\on{end}},x) = 
x+1\;.$$} 
We have to consider two cases. First take a point $x$, such that $x \notin \on{supp}(f)\;$. W.l.o.g. we assume that $\on{supp}(f)$ is the interval $[0, 2]$ starting at 0, which implies that in this case $x > 2\;$. Then $x$ meets the vector field at time $t_0=x-2$ and leaves it again at time $t_1 = x-2+T_{\on{shift}}$ after being shifted by one. It would meet the vector field again at time $t_2 = x-2+T_{\on{shift}} + 2\pi - 2\;$, but because $x-2 >0$ we have $t_2 > T_{\on{end}}$ and hence the point doesn't meet the vector field again.

Now take a point $0\leq x \leq 2\;$. This point starts in the vector field, leaves it at some time 
$t_0 < T_{\on{shift}}$ and meets the vector field again at time $t_0 + 2\pi - 2\;$. It then stays 
in the vector field until the $T_{\on{end}}$ since $T_{\on{end}} < t_0 + 2\pi - 2 + 
T_{\on{shift}}\;$. Altogether the point has spent time $t_0 + T_{\on{end}} - (t_0 + 2\pi -2) = 
T_{\on{shift}}$ in the vector field, so it has also been shifted by $1$. This concludes the proof.
\end{proof}

\begin{proof}[Proof of theorem \ref{vanishingthm}]
To prove vanishing of the geodesic distance we will follow the idea of
\cite{Michor102}, where it was proven that the geodesic distance vanishes
on $\on{Diff}_c(M)$ for the right-invariant $L^2$-metric. To be precise,
we only consider the connected component $\on{Diff}_0(M)$ of $\on{Id}$,
i.e. those diffeomorphisms of $\on{Diff}_c(M)$, for which there exist at
least one path, joining them to the identity. Let us denote by
$\on{Diff}_c(M)^{L=0}$ the set of all diffeomorphisms $\ph$ that
can be reached from the identity by curves of arbitrarily short length,
i.e., for each $\ep>0$ there 
exists a curve from the identity to $\ph$ with length smaller than $\ep$.

In the following we will show that $\on{Diff}_c(M)^{L=0}$ is a non-trivial
normal
subgroup 
of $\on{Diff}_c(M)$ (and $\on{Diff}_0(M)$). It was shown
in~\cite{Epstein1970,Thurston1974,Mather1974,Mather1975} that
$\on{Diff}_0(M)$ is a simple group, i.e. only the identity and the whole group are normal subgroups of $\on{Diff}_0(M)$. From this it follows that $\on{Diff}_c(M)^{L=0}$ is the whole
connected component $\on{Diff}_0(M)$. In other words, every diffeomorphism
that can be connected to the identity, can be connected via a path of
arbitrary short length.

{\bf Claim A.} \emph{$\Diff_0(M)^{L=0}$ is a normal subgroup of $\Diff_0(M)$.}
\\Given a diffeomorphism $\ps \in \Diff_0(M)$, we can choose a partition of
unity $\ta_j$ such that 
normal coordinates centered at $x_j\in M$ are defined on $\on{supp}(\ta_j)$ and
such that normal coordinates
centered at $\ps(x_j)$ are defined on $\ps(\on{supp}(\ta_j))$.
Then we can define $\ps_j = \exp_{\ps(x_j)}\i \o \ps \o \exp_{x_j}$.
For $\ph_1 \in \Diff_0(M)^{L=0}$ we choose a curve 
$t \mapsto \ph(t,\cdot)$ from the identity to $\ph_1$ with length less than
$\ep$. 
Let  $u=\ph_t \o \ph\i$. Then
\begin{align*}
\on{Len}(\ps\i \o \ph \o \ps)&\leq
C_1(\ta)\int_0^1 \| (T\ps\i \o \ph_t \o \ps) \o (\ps\i \o \ph \o \ps)\i
\|_{H^s(M,\ta)} dt \\&=
C_1(\ta)\int_0^1 \| T\ps\i \o u \o \ps \|_{H^s(M,\ta)} dt \\&=
C_1(\ta)\int_0^1 \sqrt{\sum_{j} \| \exp_{x_j}^*(\ta_j.T\ps\i \o u \o
\ps) \|^2_{H^s(\R^n)}} dt \\&=
C_1(\ta)\int_0^1 \sqrt{\sum_{j} \| T\ps_j\i . (\exp_{\ps(x_j)}^*
(\ta_j\o\ps\i.u)) \o \ps_j \|^2_{H^s(\R^n)}} dt \\&\leq
C_2(\ps,\ta) \int_0^1 \sqrt{\sum_{j} \| (\exp_{\ps(x_j)}^*
(\ta_j\o\ps\i.u)) \|^2_{H^s(\R^n)}} dt 
\\&=C_2(\ps,\ta) \int_0^1  \|u \|_{H^s(M,\ta\circ\psi\i)} dt \leq C_3(\ps,\ta)
\on{Len}(\ph)\;.
\end{align*}
Here we used that all partitions of unity $\ta$ induce equivalent norms
$H^s(M,\ta)$
and that for $h\in C^\infty(M)$ and $\ps\in \Diff_c(M)$ point wise multiplication
$f\mapsto h.f$ 
and composition $f\mapsto f\o\ps$ are bounded linear operators on
$H^s(M)$, as noted in theorems~\ref{thm:multiplication} and
\ref{thm:composition}.

{\bf Claim B.} \emph{$\Diff_0(M)^{L=0}$ is a nontrivial subgroup of
$\Diff_0(M)\;$.}\\
For a one-dimensional manifold $M$ the non-triviality  of $\Diff_0(M)^{L=0}$ under appropriate conditions on $s$ is shown in lemmma~\ref{lem:r} and lemma~\ref{lem:r2}. 
The higher dimensional cases treated here are quite obvious, since we can endow $M=S^1\times N$ or $M=\R\times N$ with a product metric and use the paths obtained in the corresponding one-dimensional case. 
To spell this out let $\ph_{\ep}(t,x)$ be a family of paths on $\Diff_c(\R)$ or $\Diff(S^1)$ indexed by $\ep$ such that each path $\ph_\ep$ connects the identity to some diffeomorphism $\ph_\ep(T,x)=\ph(x)$ and has $H^s$-length smaller than $\ep$. Then 
$\psi_\ep(t,(x,y)) = (\ph_\ep(t,x),y)$ defines a family of paths on $\Diff_c(M)$ connecting $\Id_M$ to $(\ph,\on{Id}_N)$ with
arbitrarily short $H^s$-length.

\end{proof}

\section{Positive Geodesic Distance}\label{non-vanishing}

\begin{theorem}[Positive geodesic distance]\label{positive}
For $\on{dim}(M)=1$ the Sobolev-norm of order $s$ induces positive geodesic
distance on $\Diff_c(M)$ if
$s>\frac{1}{2}$. For $\on{dim}(M)\geq 2$ it induces positive geodesic distance
if $s\geq 1$.
\end{theorem}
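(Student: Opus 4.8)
\emph{General strategy.} The plan is to bound the geodesic distance from below by a genuine metric, via functionals that are Lipschitz with respect to the $G^s$-distance. If $\Phi$ is a real-valued function on $\Diff_c(M)$ such that $\big|\tfrac{d}{dt}\Phi(\varphi(t))\big|\le C\norm{u(t,\cdot)}_{H^s}$ along every path $\varphi(t)$ with Eulerian velocity $u=\varphi_t\o\varphi\i$, then $|\Phi(\varphi_0)-\Phi(\varphi_1)|\le C\on{dist}(\varphi_0,\varphi_1)$, so any pair separated by $\Phi$ has positive distance. Two reductions streamline matters. Since $\norm{\cdot}_{H^{s'}}\le C\norm{\cdot}_{H^s}$ for $s'\le s$, the distance is monotone in $s$; hence for $\dim M\ge 2$ it suffices to prove positivity at the threshold $s=1$. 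Since $G^s$ is right invariant, $\on{dist}(\varphi_0,\varphi_1)=\on{dist}(\on{Id},\varphi_1\o\varphi_0\i)$, so it is enough to bound $\on{dist}(\on{Id},\psi)$ from below for each $\psi\neq\on{Id}$.

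\emph{Dimension one, $s>\tfrac12$.} Here I would use the Sobolev embedding $H^s(\R)\hookrightarrow C^0_b(\R)$, valid precisely for $s>\tfrac12$ and likewise on $S^1$, which gives $\norm{u(t,\cdot)}_\infty\le C\norm{u(t,\cdot)}_{H^s}$. For a path $\varphi(t,\cdot)$ one has $|\varphi_t(t,x)|=|u(t,\varphi(t,x))|\le\norm{u(t,\cdot)}_\infty$, so integrating in $t$ yields $|\varphi_1(x)-\varphi_0(x)|\le C\int_0^1\norm{u(t,\cdot)}_{H^s}\ud t = C\on{Len}(\varphi)$ for every $x$. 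Taking the supremum over $x$ gives $\on{dist}(\varphi_0,\varphi_1)\ge\tfrac1C\norm{\varphi_0-\varphi_1}_\infty$, which is strictly positive whenever $\varphi_0\neq\varphi_1$. This completes the one-dimensional case with no further work.

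\emph{Dimension $\ge 2$, $s=1$.} Now $H^1$ does not embed into $C^0$, so I would track the push-forward of the volume instead. For $\phi\in C_c^\infty(M)$ set $\Phi(\varphi)=\int_M\phi(\varphi\i(y))\ud\mu(y)$. Differentiating along a path and integrating by parts turns the $t$-derivative into $\int_M(\phi\o\varphi\i)\,\on{div}(u)\,\ud\mu$, so that
\[
\Big|\tfrac{d}{dt}\Phi\Big|\le\norm{\phi\o\varphi\i}_{L^2}\,\norm{\on{div}u}_{L^2}\le\norm{\phi\o\varphi\i}_{L^2}\,C\norm{u}_{H^1}.
\]
For the first factor I would use that a compactly supported $\varphi$ fixes the complement of a compact set setwise, so $\norm{\phi\o\varphi\i}_{L^2}^2=\int_M\phi^2\,J\varphi\,\ud\mu$ stays bounded uniformly along the path. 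Hence $\Phi$ is uniformly Lipschitz and separates every $\psi$ with $J\psi\not\equiv 1$ from the identity.

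\emph{Main obstacle.} The hard case is $\psi\neq\on{Id}$ with $J\psi\equiv 1$ (volume preserving), for which the volume functionals above are blind to $\psi$. Detecting position forces functionals of the form $\int_M(\eta\o\varphi)\,\phi\,\ud\mu$, whose $t$-derivative involves $u\o\varphi$ and hence the Lagrangian-to-Eulerian Jacobian factor $1/J\varphi$, which $\norm{u}_{H^1}$ does not control. I expect to close this by a bootstrapping estimate: if $\on{dist}(\on{Id},\psi)$ were small there would be a short path, and shortness already forces $J\varphi(t,\cdot)$ to stay close to $1$ uniformly in $t$ via the volume functional together with a Grönwall argument applied to $\partial_t\log J\varphi=(\on{div}u)\o\varphi$, after which the position functionals become Lipschitz on this restricted class of paths and deliver the lower bound. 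Making this Jacobian control quantitative is the technical heart of the higher-dimensional case, and it is exactly where the hypothesis $s\ge 1$ (controlling $\on{div}u$ in $L^2$) enters.
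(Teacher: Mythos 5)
The one-dimensional half of your proposal is correct and is essentially the paper's own proof: right invariance reduces to bounding $\on{dist}(\on{Id},\ps)$, the Sobolev embedding $H^s(\R)\hookrightarrow C^0_b(\R)$ for $s>\tfrac12$ gives $\norm{u(t,\cdot)}_\infty\le C\norm{u(t,\cdot)}_{H^s}$, and integrating $\ph_t=u\o\ph$ in $t$ bounds the sup-displacement by the path length. Your monotonicity-in-$s$ reduction is also fine. For $\dim(M)\ge2$, however, the paper proves nothing itself: it simply invokes \cite[Theorem~5.7]{Michor102}. You instead attempt a self-contained argument, which is more ambitious but incomplete, and the incompleteness is not a technicality --- it is the entire content of the higher-dimensional case.

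Concretely, your volume functional $\Phi$ can only separate $\ps$ from $\on{Id}$ when $\ps$ is not volume preserving (and even there your justification of the uniform bound on $\norm{\phi\o\ph\i}_{L^2}^2=\int\phi^2\,J\ph\,\ud\mu$ is wrong as stated: compact support of $\ph$ does not bound $J\ph$; this step can be repaired by a Gr\"onwall estimate on $\sqrt{\on{vol}(\ph(t,\on{supp}\phi))}$ using $\partial_t\on{vol}(\ph(t,U))\le\norm{\on{div}u}_{L^2}\on{vol}(\ph(t,U))^{1/2}$, with constants depending only on a length bound, which suffices). The volume-preserving case, which you flag as the ``main obstacle,'' is left open, and the mechanism you propose to close it fails: Gr\"onwall applied to $\partial_t\log J\ph=(\on{div}u)\o\ph$ requires control of $\int_0^1|(\on{div}u)(t,\ph(t,x))|\ud t$ pointwise along flow lines, whereas shortness of the path only makes $\int_0^1\norm{\on{div}u(t,\cdot)}_{L^2}\ud t$ small; a divergence that is small in $L^2$ can be enormous on a small set, so an arbitrarily short path can still compress the Jacobian to near $0$ somewhere --- this is exactly what the short paths in Section~\ref{vanishing} of the paper do. Hence $J\ph$ does \emph{not} stay uniformly close to $1$ on short paths. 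What one can extract is only integrated control, e.g. $\norm{\sqrt{J\ph(t)}-1}_{L^2}\le C\on{Len}(\ph)$, and this says nothing about the quantity $\bigl(\int\phi^2(J\ph)\i\ud\mu\bigr)^{1/2}$ that appears (after a change of variables) in the derivative of your position functionals; those functionals therefore never become Lipschitz on the class of short paths. So the $\dim(M)\ge2$, $s\ge1$ statement remains unproved in your proposal: either quote \cite[Theorem~5.7]{Michor102} as the paper does, or a genuinely different argument is needed for the volume-preserving case.
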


\begin{proof}
By the definition of the Sobolev metric it suffices to show the result for
$\Diff_c(\R^n)$.

For the case $n=1$ let $\ph_0,\ph_1\in\Diff_c(\R)$ with $\ph_0(x) \neq \ph_1(x)$
for some $x\in\R$. For any path $\ph(t,\cdot)$,
with $\ph(0,\cdot)=\ph_0$ and $\ph(1,\cdot)=\ph_1$ we have
\begin{align*}
0&\neq|\ph_1(x)-\ph_0(x)|=\left|\int_0^1 \ph_t(t,x) dt \right|=\left|\int_0^1
u(t,\ph(t,x)) dt \right|\\&
\leq \int_0^1 \left|u(t,\ph(t,x))\right| dt \leq \int_0^1 \|u(t,\cdot)\|_\infty
dt 
\leq \int_0^1 \|u(t,\cdot)\|_{C^{0,s-1/2}}dt\\&\leq \int_0^1
\|u(t,\cdot)\|_{H^{s}}dt\;.
\end{align*}
In the last step, we used the Sobolev embedding theorem, see \cite{Triebel1992}
for example. The case 
$\dim(M)\geq 2$ follows from  \cite[Theorem~5.7]{Michor102}.
\end{proof}

\section{The Geodesic Equation on $\Diff_c(\R^n)$}\label{geodEq}

In the upcoming parts we want to calculate the geodesic equation for the  two
equivalent Sobolev norms
on $\Diff_c(\R^n)$. 

\subsection{The general setting}\label{setting}
According to \cite{Arnold1966}, see \cite[section 3]{Michor109} for a
presentation directly 
applicable here, we have:
For any right invariant metric $G$ on a regular infinite dimensional Lie group, 
the  geodesic equation reads as 
$$
u_t=-\ad(u)^\top u\;.
$$
Here $\on{ad}(u)^\top$ denotes the adjoint of the adjoint representation $\ad$, which is given by
$\langle \ad(v)^{\top} u, w \rangle_G:=\langle u, \ad_v w \rangle_{G}\;$. 
Note that for $\Diff_c(\R^n)$ we have $\ad_vw = -[v,w]$ for $v,w\in \X_c(\mathbb
R^n)\;$.
The sectional curvature (for orthonormal $u,v$) at the identity is then given by
the formula
$$
\langle R(u,v)v, u \rangle_G=\frac14\|\be(u)v-\be(v)u-\on{ad}(u)v\|_{G}^2
+\langle [\be(u),\be(v)]u,v \rangle_G
$$
where $\be(u)v:= \on{ad}(u)^\top v +\on{ad}(u)v\;$.
This last expression is from \cite[section 2.6]{MMM11}.
The Jacobi equation for a right trivialized Jacobi field $y$ along a geodesic
with right trivialized 
velocity field $u$ (which satisfies the geodesic equation) is derived in 
\cite[3.4 and 3.5]{Michor69} as:
$$
y_{tt}= [\ad(y)^\top+\ad(y),\ad(u)^\top]u   
     - \ad(u)^\top y_t -\ad(y_t)^\top u + \ad(u)y_t\,.
$$ 
This will allow us to write down the curvature and the Jacobi equations for all
metrics that we 
will treat below. Since this leads to complicated formulas we will not spell
this out.

\begin{theorem}\label{th:ge}
Let $A: \X_c(\R^n) \to \X_c(\R^n)$ be an elliptic, scalar (pseudo)-differential
operator that  is positive and 
self-adjoint with respect to the $L^2$-metric. Then $A$ induces a metric on
$\Diff_c(\R^n)$ in the following way:
$$G_{\ph}^A(X,Y):=\langle A (X\o\ph\i),Y\o\ph\i \rangle_{L^2(\R^n)}\;.$$
The geodesic equation with respect to the $G^A$-metric is then given by
\[ A u^k_t= - \sum_{i=1}^n\left( A u^i(\p_ku^i)+(A(\p_i u^k).u^i+A u^k.(\p_i
u^i))\right).\]
Equivalently it can be written in terms of the  momentum $m=A u$:
\[ m^k_t =- \sum_{i=1}^n\left( m^i(\p_ku^i)+((\p_i m^k).u^i+m^k.(\p_i
u^i))\right),\quad u^k = A\i m^k .\]
\end{theorem}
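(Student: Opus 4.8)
We need to derive the geodesic equation for the right-invariant metric $G^A$ on $\Diff_c(\R^n)$ induced by the operator $A$. The plan is to apply the general Euler--Arnold formalism recalled in Section~\ref{setting}: the geodesic equation for any right-invariant metric reads $u_t = -\ad(u)^\top u$, where $u = \ph_t \o \ph\i$ is the right-trivialized velocity and $\ad(u)^\top$ is the adjoint of $\ad(u)$ with respect to the metric on the Lie algebra. So the entire computation reduces to identifying $\ad(u)^\top u$ explicitly in terms of $A$ and the partial derivatives of $u$.

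\textbf{Computing the transpose of the adjoint.}

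First I would fix the inner product: on $\X_c(\R^n)$ the metric is $\langle X, Y\rangle = \langle AX, Y\rangle_{L^2}$, and recall from Section~\ref{setting} that on $\Diff_c(\R^n)$ one has $\ad_v w = -[v,w]$. The key identity to unwind is the defining relation $\langle \ad(u)^\top u, w\rangle_G = \langle u, \ad_u w\rangle_G = -\langle Au, [u,w]\rangle_{L^2}$, which must hold for all test fields $w \in \X_c(\R^n)$. So I would expand the bracket in coordinates, $[u,w]^k = \sum_i(u^i \p_i w^k - w^i \p_i u^k)$, substitute into $-\langle Au, [u,w]\rangle_{L^2}$, and then integrate by parts to move all derivatives off $w$ and onto the remaining factors. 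Using self-adjointness of $A$ with respect to $L^2$ (pushing $A$ onto $w$ where convenient) and collecting the coefficient of each component $w^k$, I obtain an expression of the form $\langle (\text{something}), w\rangle_{L^2}$. Matching against $\langle A u_t, w\rangle_{L^2}$, since the geodesic equation is $u_t = -\ad(u)^\top u$ and the metric has the weight $A$, yields the stated equation
\[
A u^k_t = -\sum_{i=1}^n\left( A u^i\,(\p_k u^i) + (A(\p_i u^k)\cdot u^i + A u^k\cdot(\p_i u^i))\right).
\]
Rewriting in terms of the momentum $m = Au$ and recording $u^k = A\i m^k$ then gives the momentum form directly.

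\textbf{The main obstacle.}

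The delicate point is the integration by parts when $A$ is a \emph{pseudo}-differential operator rather than a genuine differential operator. For a differential operator the product rule and integration by parts are elementary and local; but a pseudo-differential $A$ is nonlocal, so identities such as moving $A$ past a multiplication by $u^i$ are not available. What saves the computation is that the three structurally different terms arise from the three factors in $\langle Au, [u,w]\rangle_{L^2}$: the term $Au^i\,\p_k u^i$ comes from the piece $w^i\p_i u^k$ where $A$ sits on the fixed factor $u$ and no derivative hits $w$, while the terms $A(\p_i u^k)\cdot u^i$ and $Au^k\cdot(\p_i u^i)$ arise from $u^i\p_i w^k$ after integration by parts combined with self-adjointness of $A$ to transfer it onto the $u^k$ factor. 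The care needed is precisely to verify that only self-adjointness of $A$ (not any commutation with multiplication operators) is invoked, and that every integration by parts is justified by the compact support of $u$ and $w$. Once the bookkeeping of which factor carries $A$ is done correctly, the result is purely formal and matches the claimed formula.
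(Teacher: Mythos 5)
Your overall strategy coincides with the paper's: invoke the Euler--Arnold equation $u_t=-\ad(u)^\top u$ from Section~\ref{setting}, compute $\ad^\top$ from its defining relation by pairing against an arbitrary test field $w\in\X_c(\R^n)$, expand the bracket in coordinates, integrate by parts using compact support, and read off the coefficient of each $w^k$. That skeleton is right, and your identification of where the term $Au^i\,(\p_k u^i)$ comes from is correct.

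However, what you present as the key mechanism is misidentified, and followed literally it would fail. Self-adjointness of $A$ is never used in the computation; it only guarantees that $G^A$ is symmetric, i.e.\ a metric. In
$\langle u,\ad_u w\rangle_{G^A}=\int_{\R^n}\sum_{k,i} Au^k\,\bigl(w^i\,\p_i u^k - u^i\,\p_i w^k\bigr)\ud x$
the operator $A$ sits on $u^k$ from the start and never has to move: the first summand carries no derivative on $w$, and the second becomes $\int\sum_{k,i}\p_i\bigl(Au^k\,u^i\bigr)\,w^k\ud x$ after a single integration by parts, so the remaining two terms come purely from the Leibniz rule. ``Pushing $A$ onto $w$'' via self-adjointness is in fact a dead end: it would put $A$ on the whole bracket $[u,w]$, and a nonlocal operator does not distribute over the products $u^i\,\p_i w^k$, so nothing simplifies. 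Conversely, the one commutation that \emph{is} silently invoked---and which your proposal explicitly disclaims---is $\p_i(Au^k)=A(\p_i u^k)$: integration by parts naturally produces $\p_i(Au^k)\,u^i$, i.e.\ the momentum form $(\p_i m^k)\,u^i$, and rewriting this as $A(\p_i u^k)\,u^i$, as in the velocity form of the statement, requires $A$ to commute with partial derivatives. This holds for the Fourier-multiplier operators $A_s$ and $\overline{A}_s$ to which the theorem is applied (the paper's proof also uses it tacitly), but not for a general pseudo-differential operator. So your derivation, as described, yields the momentum form correctly but cannot produce the stated equation for $Au^k_t$ under the rigor constraints you impose on yourself; the fix is to drop the appeal to self-adjointness and instead note (or assume) that $A$ commutes with $\p_i$.
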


\begin{proof}
For $u,v,w\in\X_c(\R^n)$ we calculate
\begin{align*}
&\langle u, -[v,w] \rangle_{G^A}=\int_{\R^n} \langle  A
u,-[v,w]\rangle_{\R^n}\ud x\\ &\qquad
=\int_{\R^n}\sum_{k=1}^n
Au^k\sum_{i=1}^n\left((\p_iv^k)w^i-v^i(\p_iw^k)\right)\ud x\\ &\qquad
=\int_{\R^n}\sum_{k=1}^n\sum_{i=1}^n
Au^k(\p_iv^k)w^i+\p_i(Au^k.v^i)\p_iw^k\ud x\\ &\qquad
=\int_{\R^n}\sum_{i=1}^n\sum_{k=1}^n
Au^i(\p_kv^i)w^k+\sum_{k=1}^n\sum_{i=1}^n(A(\p_i u^k).v^i+A u^k.(\p_i
v^i))w^k\ud x\\ &\qquad
=\int_{\R^n} \sum_{k=1}^n w^k A\left(\ad(v)^{\top} u\right)^k \ud x=\langle
\ad(v)^{\top} u, w\rangle_{G^A}\;.\\
&\ad(v)^{\top} u = \sum_{i=1}^n A\i\left( A u^i(\p_ku^i)+(A(\p_i
u^k).u^i+A u^k.(\p_i u^i))\right)\;.\qedhere
\end{align*}
\end{proof}

\begin{theorem}[Geodesic equation for the Sobolev metric $G^s$]
The operator $$A_s:H^{k+2s}(\R^n) \to H^{k}(\R^n),\quad u(x) \mapsto\left(\F\i
(1+|\xi|^2)^{s}\F u\right) (x)$$  induces the Sobolev metric $G^s$ of order $s$
on $\Diff_c(\R^n)$. 
The geodesic equation for this metric reads as:
\begin{align*}
m^k_t &=- \sum_{i=1}^n\left( m^i(\p_ku^i)+((\p_i m^k).u^i+m^k.(\p_i
u^i))\right), \\
u^k &=\begin{cases} 
(2\pi)^{\frac{n}{2}}\frac{2^{1-s} |\cdot|^{s-\frac{n}{2}}}{\Ga(s)}
K_{s-\frac{n}{2}}(|\cdot|)\star m^k,\quad s>\frac{n-1}{4}\\
(2\pi)^{\frac
n2}|\cdot|^{1-\frac{n}{2}}\int_0^{\infty}J_{\frac{n}{2}-1}(r.|\cdot|)\frac{r^{
\frac{n}{2}}}{(1+r^2)^{s}}\ud r \star m^k,\quad s\leq\frac{n-1}{4}\;.
\end{cases}  
\end{align*}
Here $J_{n/2-1}$ denotes the Bessel function of the first kind, which is given
by
$$J_\alpha(r) =
   \frac{1}{\pi} \int_0^\pi \cos(\alpha t- r \sin t)\,d t
 - \frac{\sin(\alpha\pi)}{\pi} \int_0^\infty
          e^{-r \sinh(t) - \alpha t} \, dt\;,$$
and  $K_{s-\frac n2}$ denotes the modified Bessel function of second kind, which
is given by
\[ K_\nu(r) = \frac{\Ga(\nu+\frac 12)(2r)^\nu}{\sqrt \pi} \int_0^\infty
\frac{\cos t}{(t^2+r^2)^{\nu+\frac 12}} \ud t\;.\]
\end{theorem}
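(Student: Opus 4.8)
The plan is to reduce the whole statement to the already-proved Theorem \ref{th:ge} and then carry out the single genuine computation, namely the inversion of $A_s$. First I would check that $A_s=(1-\Delta)^s$ satisfies the hypotheses of Theorem \ref{th:ge}: it is a Fourier multiplier with the real, positive, radial symbol $(1+|\xi|^2)^s$, hence elliptic and scalar (it acts on each component of a vector field by the same multiplier and commutes with the constant-coefficient derivatives occurring in Theorem \ref{th:ge}), and by Plancherel it is $L^2$-self-adjoint and positive. The claim that $A_s$ induces $G^s$ is then immediate from \eqref{Hs_norm}: by Parseval $\langle A_s f,g\rangle_{L^2}=\int_{\R^n}(1+|\xi|^2)^s\,\F f\,\overline{\F g}\,d\xi=\langle f,g\rangle_{H^s}$, so $G^{A_s}_\ph=G^s_\ph$. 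With this identification the momentum form of the geodesic equation, with $m=A_s u$, is exactly the momentum equation of Theorem \ref{th:ge} and requires no further work.

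The only real content is therefore the explicit description of $u=A_s^{-1}m$. Since $A_s$ is a radial Fourier multiplier, $A_s^{-1}$ is convolution with the tempered distribution $K$ whose Fourier transform is a constant multiple of $(1+|\xi|^2)^{-s}$, the precise constant being read off from the convolution theorem for the chosen normalization of $\F$. Because the symbol is radial, $K$ is a radial distribution, and I would use the reduction of an $n$-dimensional Fourier integral of a radial function to a one-dimensional Hankel transform, $\int_{\R^n}e^{i\langle x,\xi\rangle}g(|\xi|)\,d\xi=(2\pi)^{n/2}|x|^{1-n/2}\int_0^\infty g(\rho)J_{n/2-1}(\rho|x|)\rho^{n/2}\,d\rho$. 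Applied to $g(\rho)=(1+\rho^2)^{-s}$ this produces exactly the second ($J_{n/2-1}$) formula in the statement.

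To obtain the closed form in terms of the modified Bessel function I would evaluate this Hankel integral. Two equivalent routes are available: quote the classical Hankel-transform evaluation (Gradshteyn--Ryzhik 6.565), or—more self-contained—insert the subordination identity $(1+|\xi|^2)^{-s}=\Gamma(s)^{-1}\int_0^\infty t^{s-1}e^{-t(1+|\xi|^2)}\,dt$, interchange the order of integration (by Fubini where the integrand is absolutely integrable, and by analytic continuation in $s$ otherwise), perform the Gaussian integral $\int_{\R^n}e^{i\langle x,\xi\rangle}e^{-t|\xi|^2}\,d\xi=(\pi/t)^{n/2}e^{-|x|^2/4t}$, and recognize the remaining $t$-integral $\int_0^\infty t^{s-n/2-1}e^{-t-|x|^2/4t}\,dt=2(|x|/2)^{s-n/2}K_{s-n/2}(|x|)$ as the integral representation of $K_{s-n/2}$. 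Collecting the constants gives the first formula.

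The main obstacle is analytic rather than algebraic: the symbol $(1+|\xi|^2)^{-s}$ is not integrable when $2s\le n$, so $K$ is only a tempered distribution and the Hankel integral converges merely conditionally. Tracking where the oscillatory radial integral still converges is precisely what produces the threshold: since $J_{n/2-1}(z)=O(z^{-1/2})$, the radial integrand decays like $\rho^{\,n/2-2s-1/2}$ at infinity, so the oscillatory integral converges exactly when this exponent is negative, i.e. for $s>\frac{n-1}{4}$. For $s>\frac{n-1}{4}$ the integral converges and equals the $K_{s-n/2}$ closed form, whereas for $s\le\frac{n-1}{4}$ one retains the formal (distributional) radial representation; this is exactly the case split in the statement. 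The residual work is bookkeeping of the powers of $2\pi$ and of $2$ through the convolution theorem and the Bessel-function normalizations.
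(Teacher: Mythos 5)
Your proposal is correct and follows essentially the same route as the paper: verify that $A_s$ satisfies the hypotheses of Theorem \ref{th:ge} and induces $G^s$, then compute $u=A_s^{-1}m$ as convolution with the radial inverse Fourier transform of $(1+|\xi|^2)^{-s}$ via the Hankel-transform reduction, yielding the $J_{n/2-1}$ representation and, when the integral converges, the $K_{s-n/2}$ closed form. The only difference is one of detail: you derive the modified-Bessel formula yourself (via the subordination identity and the Gaussian integral) and justify the threshold $s>\frac{n-1}{4}$ from the decay $J_{n/2-1}(z)=O(z^{-1/2})$, whereas the paper simply states these facts with a citation to the books of Stein et al.
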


\begin{proof}
The operator $A_su=\F\i (1+|\xi|^2)^{s}\F u $ is an elliptic, scalar
(pseudo)-differen\-tial operator that  is positive and 
self-adjoint with respect to the $L^2$-metric. In particular it is a linear
isomorphism from  $H^{k+2s}(\R^n) \to H^{k}(\R^n)$.
By theorem \ref{th:ge} 
it  remains to calculate the operator $A_s\i$.
This can be done  as follows:
\begin{align*}
A\i_sm(x) &= \F\i (1+|\xi|^2)^{-s} \F m(x)= (2\pi)^{\frac{n}{2}} \F\i
\left((1+|\xi|^2)^{-s}\right) \star m(x)\;.
\end{align*}
Let $f$ be a radial symmetric function  on $\R^n$, i.e., $f(\xi)=f(|\xi|)$. Then
we have 
$$\F\i(f)(x)=|x|^{1-\frac{n}{2}}\int_0^{\infty}J_{\frac{n}{2}-1}(r.|x|).f(r).r^{
\frac{n}{2}}dr\;.$$ 
For the function $f(\xi)=(1+|\xi|^2)^{-s}$ this yields:
\begin{align*}
\F\i \left((1+|\xi|^2)^{-s}\right)&
=|x|^{1-\frac{n}{2}}\int_0^{\infty}J_{\frac{n}{2}-1}(r.|x|)\frac{r^{\frac{n}{2}}
}{(1+r^2)^{s}}\ud r\;.
\end{align*}
See for example the books \cite{Stein2004,Stein1971} for more details about
Fourier transformation of radial symmetric functions.
For $s>\frac{n-1}{4}$ the last integral converges and we have
\[ \F\i \left((1+|\xi|^2)^{-s}\right)=\frac{2^{1-s} |x|^{s-\frac{n}{2}}}{\Ga(s)}
K_{s-\frac{n}{2}}(|x|)\;.\qedhere\]
\end{proof}
An immediate consequence of the above analysis is the geodesic equation for the
equivalent Sobolev-metric $\overline{G}^s$.
\begin{theorem}[Geodesic equation for the Sobolev metric $\overline{G}^s$]
The operator $$\overline{A}_s:H^{k+2s}(\R^n) \to H^{k}(\R^n),\quad u(x)
\mapsto\left(\F\i (1+|\xi|^{2s})
\F u\right) (x)$$  
induces the Sobolev metric $\overline{G}^s$ of order $s$ on $\Diff_c(\R^n)$. 
The geodesic equation for this metric reads as:
\begin{align*}
m^k_t &=- \sum_{i=1}^n\left( m^i(\p_ku^i)+((\p_i m^k).u^i+m^k.(\p_i
u^i))\right)\;, \\
u^k &=(2\pi)^{\frac
n2}|\cdot|^{1-\frac{n}{2}}\int_0^{\infty}J_{\frac{n}{2}-1}(r.|\cdot|)\frac{r^{
\frac{n}{2}}}{(1+r^{2s})}\ud r\star m^k\;.
\end{align*}
\end{theorem}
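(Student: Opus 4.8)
The plan is to reduce everything to Theorem~\ref{th:ge}, since $\overline{A}_s$ is precisely an operator of the type considered there. First I would check the hypotheses. The Fourier multiplier $1+|\xi|^{2s}$ is real-valued and strictly positive, so $\overline{A}_s$ is self-adjoint and positive with respect to the $L^2$-metric; it acts by the same scalar multiplier on each component, so it is scalar; and since $1+|\xi|^{2s}$ is comparable to $(1+|\xi|^2)^s$ (both behave like $|\xi|^{2s}$ as $|\xi|\to\infty$ and like a positive constant near the origin), $\overline{A}_s$ is elliptic of order $2s$ and a linear isomorphism $H^{k+2s}(\R^n)\to H^k(\R^n)$, exactly as $A_s$. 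With these properties the induced right-invariant metric $\langle\overline{A}_s(X\o\ph\i),Y\o\ph\i\rangle_{L^2(\R^n)}$ is, by \eqref{Hsbar_norm}, the $\overline{H}^s$-metric $\overline{G}^s$.

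Once the hypotheses are verified, Theorem~\ref{th:ge} yields the momentum form of the geodesic equation verbatim, with $m=\overline{A}_su$ and $u^k=\overline{A}_s\i m^k$; this is exactly the first displayed line of the statement, so it requires no separate computation. The only remaining task is to make $\overline{A}_s\i$ explicit, following the same route as in the proof of the geodesic equation for $G^s$. Writing $\overline{A}_s\i$ as a Fourier multiplier and then as a convolution gives
\[ \overline{A}_s\i m(x)=\F\i\big((1+|\xi|^{2s})^{-1}\big)\F m(x)=(2\pi)^{\frac n2}\,\F\i\big((1+|\xi|^{2s})^{-1}\big)\star m(x)\;. \]
Since $(1+|\xi|^{2s})^{-1}$ is radial, I would then apply the radial inverse Fourier transform formula
\[ \F\i(f)(x)=|x|^{1-\frac n2}\int_0^\infty J_{\frac n2-1}(r|x|)\,f(r)\,r^{\frac n2}\ud r \]
with $f(r)=(1+r^{2s})^{-1}$; multiplying by $(2\pi)^{n/2}$ and convolving with $m^k$ produces exactly the second displayed line.

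The one genuine difference from the $G^s$ case is that the radial profile $(1+r^{2s})^{-1}$ does not collapse to a named special function, so the convolution kernel must be left in integral form rather than written through a modified Bessel function $K_\nu$; in particular there is no closed-form analogue of the $s>\tfrac{n-1}{4}$ simplification. Throughout, $(1+|\xi|^{2s})^{-1}$ is bounded, hence a tempered distribution, so $\F\i\big((1+|\xi|^{2s})^{-1}\big)$ is always well defined and the radial formula holds distributionally; using $J_{\frac n2-1}(r|x|)\sim r^{-1/2}$ the integrand decays like $r^{(n-1)/2-2s}$, so the oscillatory integral converges classically precisely for $s>\tfrac{n-1}{4}$, the same threshold as before. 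I expect this convergence bookkeeping, together with confirming that $\overline{A}_s$ inherits the ellipticity and $H^{k+2s}\to H^k$ mapping properties of $A_s$ despite being genuinely nonlocal, to be the only mildly technical points; the heart of the proof is the direct appeal to Theorem~\ref{th:ge}.
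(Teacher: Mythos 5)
Your proposal is correct and follows essentially the same route as the paper, which presents this theorem as an immediate consequence of the preceding analysis: verify that $\overline{A}_s$ satisfies the hypotheses of Theorem~\ref{th:ge} (the multiplier $1+|\xi|^{2s}$ being positive, scalar, and comparable to $(1+|\xi|^2)^s$), which yields the momentum form of the geodesic equation, and then compute $\overline{A}_s\i$ as a convolution via the radial inverse Fourier transform formula with $f(r)=(1+r^{2s})^{-1}$. Your additional bookkeeping on the convergence threshold and the absence of a closed-form Bessel-$K$ expression is a harmless refinement the paper does not bother to spell out.
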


\subsection{The Geodesic Equation in dimension one}\label{geod_eq_dim1}

For the $\overline G^{s}$-metric on $\Diff_c(\R)$ or $\Diff(S^1)$
 the above expression for the geodesic equation simplifies to
\[ m_t = -2u_xm - um_x,\quad u = (2\pi)^{\frac
12}\int_0^{\infty}J_{-\frac{1}{2}}(r.|\cdot|)\frac{r^{\frac{1}{2}}}{(1+r^{2s})}
\ud r\star m\;. \]
For $s=k\in \N$ we can rewrite this equation as:
\[m_t = -2u_xm - um_x, \quad m=u+\p^{2 k}_x u\;,\]
where $m(t,x)$ is the momentum corresponding to the velocity $u(t,x)$. 
In the case $s=0$ this becomes the inviscid Burger equation
$$u_t=-3u_x u\;,$$
for $s=1$ it is the Camassa Holm equation
$$u_t- u_{xxt} + 3 u u_x = 2 u_x u_{xx} + u u_{xxx}\;,$$
and for $s>1$ they are related to the higher order Camassa Holm equations, see
\cite{Constantin2003}.

To study the Sobolev metric $\overline{G}^{s}$, for $s=k+\frac12$ we introduce
the Hilbert transform, which is given by
\[\F(\mathcal H f)(\xi)=-i\on{sgn}(\xi)\F f(\xi)\;. \]
Using this we can write the metric $\overline{G}^{k+\tfrac12}$  in the form
\begin{align*}
\overline{G}^{k+\tfrac12}(u,v)=\int_\R (u+\mathcal H \p^{2k+1}_xu)v \ud
x=\int_\R (u+\p^{2k+1}_x(\mathcal H u))v \ud x\;.
\end{align*}
The geodesic equation is then given by
\[ m_t = -2u_xm - um_x,\quad m=u + \mathcal H \p^{2k+1}_xu\;.\]
If we pass to the homogenous space $\Diff(S^1)/S^1$ and consider the homogenous
Sobolev metric 
of order one half the geodesic equation reduces to
\[ m_t = -2u_xm - um_x,\quad m= \mathcal H u_x\;,\]
which is  the modified Constantin-Lax-Majda equation, see \cite{Wunsch2010a}. 
If we consider the homogenous Sobolev metric of order one the resulting geodesic
equation 
\[u_{xxt} = -2u_xu_{xx} - uu_{xxx}\]
is the Hunter-Saxton equation. 

\section{Conclusions.}

In this paper we have provided a partial answer to the problem of vanishing geodesic distance for Sobolev-type metrics on diffeomorphism groups. Some cases remain open.

{\bf Conjecture.} For $M=\R$ and $s=\frac 12$ the geodesic distance vanishes.

We believe that a similar construction as in Lemma \ref{lem:r2} can be used to
construct paths of arbitrary short length. However, since the diffeomorphisms need
to have compact support, we need to adapt the construction to reach a
diffeomorphism of the form $\ph(x)=x+c(x)$ as in Lemma \ref{lem:r}. The difficulty
lies in constructing a vector field, whose flow at time $t=1$ we can control.

The more interesting question is about the behaviour for arbitrary Riemannian manifolds of
bounded geometry and higher dimension:

{\bf Conjecture.} 
The geodesic distance vanishes on the space $\on{Diff}_c(M)$ for $0 \leq s
\leq \frac 12$, for $M$ an arbitrary manifold of bounded geometry.

For $0 \leq s \leq \frac 12$ we believe that it is
possible to adapt the method of \cite{Michor102} to show that there exists a diffeomorphism that can be
connected to the identity by paths of arbitrary short length. 
The simplicity of the diffeomorphism group then concludes the proof.

{\bf Conjecture.} The geodesic distance is positive for $s>\frac 12$. 

The proof of Theorem~\ref{positive} establishing positivity of the geodesic distance for $\dim(M)=1$ and $s>\frac12$ 
is based on the Sobolev embedding theorem, which holds
for $s$ greater than the critical index $\on{dim}(M)/2$. The argument generalizes to higher dimensions, 
but in $\dim(M)\geq2$ the result \cite[Theorem~5.7]{Michor102} is stronger. Namely, it is shown that 
the geodesic distance is positive for $s\geq 1$ in all dimensions. To prove the conjecture 
it remains to improve the bound $s\geq1$ to $s>\frac12$ for $\dim(M)\geq 2$.

The proof of Theorem~\ref{vanishingthm} provides a hint that this can be done and that the 
bound $s>\frac12$ is optimal. 
The idea of the proof in dimension one is to compress the space to a point and to move
this point around. This results in a path of diffeomorphisms with short $H^s$-length 
because there are functions with small $H^s$-norm that are nevertheless large at some point. 
The obvious generalization to higher dimensions
is to compress the space to a set of codimension one and to move this set around.
Again, this should result in a path of short $H^s$-length when there are functions with small $H^s$-norm 
that are large at a set of codimension one. This is the case exactly for $s\leq \frac 12$.

Another interesting question is whether our results carry over to the Virasoro-Bott group.
In \cite{Michor122} it was shown that the right invariant $L^2$-metric on the
Virasoro-Bott group has vanishing geodesic distance. The key to the proof in \cite{Michor122} is 
to control the central cocycle along a curve of diffeomorphisms. This is done by expressing the cocycle
in terms of the diffeomorphism and its derivatives. In contrast to this, 
all derivatives in the constructions of the present work are left-trivialized. 
We believe that this is not a serious obstacle and that the results of this paper can be extended to Sobolev metrics of fractional order 
on the Virasoro-Bott group.

\bibliographystyle{plain}

\end{document}